\begin{document}

\begin{center}
    \textbf{The general form of the Euler--Poisson--Darboux equation and application of transmutation method}
\end{center}

\begin{center}
    \textbf{Elina L. Shishkina, Sergei M. Sitnik}
\end{center}

\textbf{Keywords:} {Bessel operator, Euler--Poisson--Darboux equation, Hankel transform;

\begin{abstract}
In the paper we find solution representations in the compact integral form  to the  Cauchy problem for  a general form of the Euler--Poisson--Darboux equation with Bessel operators via generalized translation and spherical mean operators for all values of the parameter $k$, including also not studying before exceptional odd negative values.  We use a Hankel transform method to prove results in a unified way. Under additional conditions we prove that a distributional solution is a classical one too.  A transmutation property for connected generalized spherical mean is proved and importance of applying transmutation methods for differential equations with Bessel operators is emphasized. The paper also contains a short historical introduction on differential equations with Bessel operators and a rather detailed reference list of monographs and papers on mathematical theory and applications of this class of   differential equations.
\end{abstract}

\numberwithin{equation}{section}
\newtheorem{theorem}{Theorem}[section]
\newtheorem{lemma}{Lemma}[section]
\newtheorem{corollary}{Corollary}[section]
\allowdisplaybreaks

\section{Introduction}

The classical Euler--Poisson--Darboux (EPD)  equation is defined by
\begin{equation}\label{ClEPD}
\frac{{\partial }^2u}{\partial t^2}+\frac{k}{t}\frac{\partial u}{\partial t}=\sum\limits^n_{i=1}\frac{{\partial }^2u}{\partial x^2_i},\quad u=u(x,t;k),\quad x\in\mathbb{R}^n,\quad t>0,\quad -\infty<k<\infty.
\end{equation}
The operator acting by variable $t$ in \eqref{ClEPD} is  the \textbf{Bessel operator} and we will denote it (see, for example, \cite{Kipr}, p. 3) as
$$
(B_k)_t=\frac{{\partial }^2}{\partial t^2}+\frac{k}{t}\frac{\partial }{\partial t}.
$$
When $n=1$ the equation \eqref{ClEPD} appears in Leonard Euler's work (see \cite{Euler} p. 227) and later was studied by Sim${\rm \acute{e}}$on Denis Poisson in  \cite{Poisson}, by Gaston Darboux in \cite{Darboux} and by Bernhard Riemann in \cite{Riman}.

For the Cauchy problem initial conditions to the solution of equation \eqref{ClEPD} are added
\begin{equation}\label{USL0}
    u(x,0;k)=f(x),\qquad \frac{\partial u(x,t;k)}{\partial t}\biggr|_{t=0}=0.
\end{equation}

An interest to the multidimensional equation \eqref{ClEPD} has increased significantly after Alexander Weinstein's papers \cite{Weinstein0}--\cite{Weinstein4}. In \cite{Weinstein0}--\cite{Weinstein1} the Cauchy problem  for  \eqref{ClEPD} is considered with  $k\in\mathbb{R}$, the first initial condition being non--zero and the second initial condition equals to zero.
A solution of the Cauchy problem \eqref{ClEPD}--\eqref{USL0}  in the classical sense was obtained in \cite{Weinstein0}--\cite{Young} and in the distributional sense in \cite{Bresters2}, \cite{CSh}.
S.A. Tersenov in \cite{Tersenov} solved  the Cauchy problem  for  \eqref{ClEPD} in the general form where the first and the second conditions are non--zeros.
Different problems for the equation \eqref{ClEPD} with many applications to gas dynamics, hydrodynamics, mechanics, elasticity and plasticity and so on  were also studied in \cite{Aks}, \cite{Blum}--\cite{CSh}, \cite{Chap}, \cite{Diaz}--\cite{Dza2}, \cite{Fal}, \cite{Fran1}--\cite{Fran3}, \cite{Gordeev}, \cite{Hairul}, \cite{KuFr}--\cite{Mizes}, \cite{Pul1}--\cite{Pul2}, \cite{Smirnov}--\cite{Vekua1}, \cite{Young}, \cite{ZhTr}, and  of course the above list of references is incomplete.

In this article we consider the singular with respect to all variables hyperbolic differential equation, which is a generalization of multidimensional Euler--Poisson--Darboux (EPD) equation \eqref{ClEPD}:
\begin{equation}\label{EPD0}
\frac{{\partial }^2u}{\partial t^2}+\frac{k}{t}\frac{\partial u}{\partial t}=(\triangle_\gamma)_x\ u,\quad \quad u=u(x,t;k), \quad k\in\mathbb{R},\quad t>0,
\end{equation}
with the  singular elliptic operator defined by
\begin{equation}\label{LapBess1}
(\triangle_\gamma)_x=\sum\limits_{i=1}^{n} (B_{\gamma_i})_{x_i}=\sum\limits_{i=1}^{n}\left( \frac{\partial ^{2} }{\partial x_{i}^{2} } +\frac{\gamma _{i} }{x_{i} } \frac{\partial}{\partial x}\right)=\sum\limits_{i=1}^{n}\frac{1}{x_i^{\gamma_i}}\frac{\partial}{\partial
x_i}x_i^{\gamma_i}\frac{\partial}{\partial x_i}
\end{equation}
under the next natural restrictions
$$
\gamma_i>0, \qquad  x=(x_1,...,x_n),\qquad x_i>0,\qquad i=1,2,...,n,
$$
together with  initial conditions \eqref{USL0} as above.

We will call the equation \eqref{EPD0} as the \textbf{Euler--Poisson--Darboux equation in the general form}.

Let us specially emphasize that singular differential equations with the operator \eqref{LapBess1} including equations \eqref{ClEPD} and \eqref{EPD0} were thoroughly studied in many papers by  I.A.~Kipriyanov's school, the results are partially systemized in his monograph \cite{Kipr}. In accordance with I.A.~Kipriyanov's terminology the operator \eqref{LapBess1} is classified as $B$--elliptic operator (sometimes also the term Laplace--Bessel operator is used), and equations \eqref{ClEPD} and  \eqref{EPD0} are classified as $B$--hyperbolic equations.
In connection with results of this scientific school let us mention  papers  of L.A.~Ivanov \cite{Ivan1}--\cite{Ivan2}, \cite{Ivan3} in which important problems for EPD equation were solved, such as generalizations to homogeneous symmetric Riemann spaces, energy equipartition property, equations with a product of EPD--type multipliers. Also note papers \cite{Barabash}, \cite{Shi5}, \cite{LPSh2} on application of spherical mean and generalized translation operators, generalized mean value theorems. Differential equations of EPD type are applied in the study of fractional powers of EPD, generalized EPD operators and connected generalized Riesz--type potentials, cf. \cite{Sh1}--\cite{Sh4}.

Another important approach to differential equations with Bessel operators is based on application of the transmutation theory. This method is essential in the study of singular problems with use of special classes of transmutations such as Sonine, Poisson, Buschman--Erd\'elyi ones and different forms of fractional integrodifferential operators, cf. \cite{CSh}--\cite{Car3}, \cite{Sit6}--\cite{Sit8}, \cite{Kra}, \cite{Sit1}--\cite{Sit5}. Abstract differential equations with Bessel operators were studied in and in fact were mostly initiated by the famous monograph \cite{CSh}, cf. also recent papers \cite{Glushak1}--\cite{Glushak3}.

Considering the Cauchy problem \eqref{EPD0}--\eqref{USL0} in more details, David Fox in \cite{Fox} (cf. also  \cite{CSh}, p. 243 and \cite{Stellmacher}) proved solution uniqueness  for $k\geq 0$ and find a solution representation in the explicit form for all $k$ except odd negative values. The explicit solution was found via Lauricella functions in fact as $n$--times series, which is not convenient  for applications and numerical solving. In all the above references the case $k\neq -1,-3,-5,...$ was expelled and  not studied.
So in \cite{Barabash}, \cite{Shi5}, \cite{LPSh2} different approaches from those used in \cite{Fox} to the solution of this Cauchy problem were considered.

In this paper we find solution representations to the above Cauchy problem in the compact integral form via generalized translation and spherical mean operators for all values of the parameter $k$, including also not studying before exceptional odd negative values.  We use a Hankel transform method to prove results in a unified way. Under additional conditions we prove that a distributional solution is a classical one too.

 \section{Definitions and propositions}

We deal with the subset of the Euclidean space
$$
\mathbb{R}^n_+{=}\{x{=}(x_1,\ldots,x_n)\in\mathbb{R}^n,\,\,\, x_1{>}0,\ldots, x_n{>}0\}.
$$
Let denote $|x|=\sqrt{\sum\limits_{i=1}^n x_i^2}$ and $\Omega$ be finite or infinite open set in $\mathbb{R}^n$ symmetric with respect  to each hyperplane $x_i{=}0$, $i=1,...,n$, $\Omega_+=\Omega\cap{\mathbb{R}}^n_+$ and $\overline{\Omega}_+=\Omega\cap\overline{\mathbb{R}}\,\!^n_+$ where $$
\overline{\mathbb{R}}\,^n_+{=}\{x{=}(x_1,\ldots,x_n)\in\mathbb{R}^n,\,\,\, x_1{\geq}0,\ldots, x_n{\geq}0\}.
$$ We consider the class $C^m(\Omega_+)$ consisting of $m$-times differentiable on $\Omega_+$ functions
and denote by $C^m(\overline{\Omega}_+)$ the subset of functions from $C^m(\Omega_+)$ such that all derivatives  of these functions with respect to $x_i$ for any $i=1,...,n$
 are continuous up to $x_i{=}0$. Function $f\in C^m(\overline{\Omega}_+)$ we will call \emph{even with respect to} $x_i$, $i=1,...,n$ if $\frac{\partial^{2k+1}f}{\partial x_i^{2k+1}}\biggr|_{x=0}=0$ for all nonnegative integer $k\leq \frac{m-1}{2}$ (see \cite{Kipr}, p. 21). Class $C^m_{ev}(\overline{\Omega}_+)$ consists of functions from $C^m(\overline{\Omega}_+)$ even with respect to each variable $x_i$, $i=1,...,n$.
In the following we will denote $C^m_{ev}(\overline{\mathbb{R}}\,\!^n_+)$ by $C^m_{ev}$.
We set
$$
C^\infty_{ev}(\overline{\Omega}_+)=\bigcap C^m_{ev}(\overline{\Omega}_+)
$$
with intersection taken for all finite $m$ and $C^\infty_{ev}(\overline{\mathbb{R}}_+)=C^\infty_{ev}$.
Let ${\stackrel{\circ}C}\,\!^\infty_{ev}(\overline{\Omega}_+)$ be the space of all functions  $f{\in}C^\infty_{ev}(\overline{\Omega}_+)$ with a compact support. Elements of ${\stackrel{\circ}C}\,\!^\infty_{ev}(\overline{\Omega}_+)$ we will call \emph{test functions} and use the notation ${\stackrel{\circ}C}\,\!^\infty_{ev}(\overline{\Omega}_+){=}\mathcal{D}_+(\overline{\Omega}_+)$.

As the space of basic functions we will use  the subspace of the space of rapidly decreasing functions:
$$
 S_{ev}({\mathbb{R}}^{n}_+)=\left\{f\in C^\infty_{ev}:\sup _{{x\in {\mathbb{R}}^{n}_+}}\left|x^{\alpha }D^{\beta }f(x)\right|<\infty \quad \forall \alpha ,\beta \in \mathbb {Z} _{+}^{n}\right\},
$$
where $\alpha=(\alpha_1,...,\alpha_n)$, $\beta=(\beta_1,...,\beta_n)$, $\alpha_1,...,\alpha_n,\beta_1,...,\beta_n$ are integer nonnegative numbers, $x^\alpha= x_1^{\alpha_1} x_2^{\alpha_2} \ldots x_n^{\alpha_n}$,
${D}^\beta={D}^{\beta_1}_{x_1}...{D}^{\beta_n}_{x_n}$, ${D}_{x_j}=\frac{\partial}{\partial x_j}$.

We deal with multi-index $\gamma{=}(\gamma_1,{\ldots},\gamma_{n})$ consists of positive fixed reals $\gamma_i>0$, $i{=}1,{...},n$, $|\gamma|{=}\gamma_1{+}{\ldots}{+}\gamma_{n}.$
Let $L_p^{\gamma}(\Omega_+)$, $1{\leq}p{<}\infty$, be the space of  all measurable in $\Omega_+$ functions even with respect to each variable $x_i$, $i=1,...,n$ such that
$$
\int\limits_{\Omega_+}|f(x)|^p x^\gamma dx<\infty,
$$
where and further
$$
 x^\gamma=\prod\limits_{i=1}^n x_i^{\gamma_i}.
$$
For a real number $p\geq 1$, the $L_p^\gamma(\Omega_+)$--norm of $f$ is defined by
$$
||f||_{L_p^\gamma(\Omega_+)}=\left(\,\,\int\limits_{\Omega_+}|f(x)|^p x^\gamma dx\right)^{1/p}.
$$

Weighted measure of $\Omega_+$ is denoted by
  ${\rm{mes}}_\gamma(\Omega)$ and is defined by formula
$$
{\rm{mes}}_\gamma(\Omega_+)=\int\limits_{\Omega_+}x^\gamma dx.
$$
For every measurable function $f(x)$ defined on $\mathbb{R}^n_+$ we consider
$$
\mu_\gamma(f,t)={\rm{mes}}_\gamma\{x\in\mathbb{R}^n_+:\,|f(x)|>t\}=\int\limits_{\{x:\,\,|f(x)|>t\}^+}x^\gamma dx
$$
where $\{x{:}|f(x)|{>}t\}^+{=}\{x{\in}\mathbb{R}^n_+{:}|f(x)|{>}t\}$. We will call the function   $\mu_\gamma=\mu_\gamma(f,t)$ a {\it{weighted distribution function}} $|f(x)|$.

A space $L_\infty^\gamma(\Omega_+)$ is defined as a set of measurable on  $\Omega_+$ and even with respect to each variable functions $f(x)$ such as
$$
||f||_{L_\infty^\gamma(\Omega_+)}=\underset{x\in \Omega_+}{{\rm ess\,sup}_\gamma}|f(x)|
=\inf\limits_{a\in\Omega_+}\{\mu_\gamma(f,a)=0\}<\infty.
$$
For $1\leq p\leq\infty$  the $L_{p,loc}^\gamma(\Omega_+)$ is the set of functions $u(x)$ defined almost everywhere in $\Omega_+$ such that $uf\in L_{p}^\gamma(\Omega_+)$ for any $f\in{\stackrel{\circ}C}\,^\infty_{ev}(\overline{\Omega}_+)$.
Each function $u(x)\in L_{1,loc}^\gamma(\Omega_+)$ will be identified with the functional $u\in \mathcal{D}_+'(\overline{\Omega}_+)$
acting according to the formula
\begin{equation}\label{RegDist}
(u,f)_\gamma=\int\limits_{\mathbb{R}^n_+} u(x)\,f(x)\,x^\gamma\, dx,\qquad f\in {\stackrel{\circ}C}\,^\infty_{ev}(\overline{\mathbb{R}}\,^n_+).
\end{equation}
Functionals $u\in \mathcal{D}_+'(\overline{\Omega}_+)$ acting by the formula \eqref{RegDist} will be called  \emph{regular weighted functionals}.
All other functionals $u\in \mathcal{D}_+'(\overline{\Omega}_+)$ will be called  \emph{singular weighted functionals}.

We will use regular weighted functional $(t^2-|x|^2)_{+,\gamma}^\lambda$ defined by the formula
\begin{equation}\label{WF}
((t^2-|x|^2)_{+,\gamma}^\lambda,\varphi)_\gamma=\int\limits_{\{x{\in}\mathbb{R}^n_+{:}|x|<t\}}(t^2-|x|^2)^\lambda\varphi(x)x^\gamma
dx,\qquad \varphi\in S_{ev},\quad \lambda{\in}\mathbb{C}.
\end{equation}

The symbol $j_\nu$ is  used for the normalized Bessel function:
$$j_\nu(t)=\frac{2^\nu\Gamma(\nu+1)}{t^\nu}J_\nu(t),$$
where  $J_{\nu}(t)$ is the Bessel function of the first kind of order $\nu$ (see \cite{Watson}). The function $j_\nu(t)$ is even by $t$.

In our investigation we use the multidimensional  Hankel (Fourier--Bessel)  transform.
The \textbf{multidimensional Hankel transform}  of a function $f(x)$ is given by (see \cite{Bateman}):
$$
F_B[f](\xi)=(F_B)_x[f(x)](\xi)=\widehat{f}(\xi)=\int\limits_{\mathbb{R}^n_+}f(x)\,\mathbf{j}_\gamma(x;\xi)x^\gamma dx,
$$
where
$$\mathbf{j}_\gamma(x;\xi)=\prod\limits_{i=1}^n j_{\frac{\gamma_i-1}{2}}(x_i\xi_i),\qquad \gamma_1>0,...,\gamma_n>0.$$

For $f\in S_{ev}$ inverse multidimensional Hankel transform is defined by
$$
F^{-1}_B[\widehat{f}(\xi)](x)=f(x)=\frac{2^{n-|\gamma|}}{\prod\limits_{j=1}^n\,
\Gamma^2\left(\frac{\gamma_j{+}1}{2}\right)}\int\limits_{\mathbb{R}^n_+}
\mathbf{j}_\gamma(x,\xi)\widehat{f}(\xi)\xi^\gamma\:d\xi.
$$

We will deal with the \textbf{singular Bessel differential operator} $B_{\nu}$ (see, for example, \cite{Kipr}, p. 5):
$$
(B_{\nu})_{t}=\frac{\partial^2}{\partial t^2}+\frac{\nu}{t}\frac{\partial}{\partial t}=\frac{1}{t^{\nu}}\frac{\partial}{\partial
t}t^{\nu}\frac{\partial}{\partial t},\qquad t>0.
$$
and the elliptical singular operator or the Laplace-Bessel operator  $\triangle_\gamma$:
\begin{equation}\label{LapBess}
\triangle_\gamma=(\triangle_\gamma)_x=\sum\limits_{i=1}^{n} (B_{\gamma_i})_{x_i}=\sum\limits_{i=1}^{n}\left( \frac{\partial ^{2} }{\partial x_{i}^{2} } +\frac{\gamma _{i} }{x_{i} } \frac{\partial}{\partial x}\right)=\sum\limits_{i=1}^{n}\frac{1}{x_i^{\gamma_i}}\frac{\partial}{\partial
x_i}x_i^{\gamma_i}\frac{\partial}{\partial x_i}.
\end{equation}
The operator \eqref{LapBess} belongs to the class of B-elliptic operators by I. A. Kipriyanovs' classification  (see \cite{Kipr}).

The \emph{B-polyharmonic of order }$p$ function $f=f(x)$ is the function $f{\in}C^{2p}_{ev}(\overline{\mathbb{R}}\,_n^+)$ such that
\begin{equation}\label{PolG1}
\Delta^p_\gamma f=0,
\end{equation}
where $\Delta_\gamma$ is operator \eqref{LapBess}.  The operator \eqref{PolG1} was considered in \cite{Kipr}. The B{--}polyharmonic of order $1$ function we will call \emph{B--harmonic}.

Using formulas 9.1.27 from \cite{AbramowitzStegunSpF} we obtain
\begin{equation}\label{BessBess}
 (B_{\nu})_t {j}_{\frac{\nu-1}{2}}(\tau t)=-\tau^2{j}_{\frac{\nu-1}{2}}(\tau t).
\end{equation}

We will use the generalized convolution operator defined  by the formula
$$
(f*g)_\gamma=\int\limits_{\mathbb{R}^n_+}f(y)(\,^\gamma T^yg)(x)y^\gamma dy,
$$
where $^\gamma T^y$ is multidimensional generalized translation
$$^\gamma T^y=\,^{\gamma_1} T_{x_1}^{y_1}...^{\gamma_n}T_{x_n}^{y_n},
$$
each one-dimensional operator $^{\gamma_i} T_{x_i}^{y_i}$, $i=1,...,n$ acts according to (see \cite{Levitan})
$$
^{\gamma_i} T_{x_i}^{y_i}f(x){=}
\frac{\Gamma\left(\frac{\gamma_i+1}{2}\right)}{\Gamma\left(\frac{\gamma_i}{2}\right)\Gamma\left(\frac{1}{2}\right)}
\times$$
$$\times\int\limits_0^\pi f(x_1,...,x_{i-1},\sqrt{x_i^2+y_i^2-2x_iy_i\cos\alpha_i},x_{i+1},...,x_n)\,\,
\sin^{\gamma_i-1}\alpha_i\,d\alpha_i\,.
$$
Based on the multidimensional generalized translation $^\gamma T^y$  the weighted spherical mean $M^\gamma_t[f(x)]$ of a suitable function is defined by the formula
\begin{equation}\label{05}
M^\gamma_t[f(x)]=\frac{1}{|S_1^+(n)|_\gamma}\int\limits_{S^+_1(n)}\,^\gamma T_x^{t\theta}f(x)
\theta^\gamma dS,
\end{equation}
where $\theta^\gamma{=}\prod\limits_{i=1}^{n}\theta_i^{\gamma_i},$ $S^+_1(n){=}\{\theta{:}|\theta|{=}1,\theta{\in}\mathbb{R}^n_+\}$ and
$|S^+_1(n)|_\gamma=\frac{\prod\limits_{i=1}^n{\Gamma\left(\frac{\gamma_i{+}1}{2}\right)}}{2^{n-1}\Gamma\left(\frac{n{+}|\gamma|}{2}\right)}.
$
It is easy to see that
\begin{equation}\label{MeanCond}
  M^\gamma_0[f(x)]=f(x),\qquad \frac{\partial}{\partial t}M^\gamma_t[f(x)]\biggr|_{t=0}=0.
\end{equation}


\begin{lemma} Let $u\in S_{ev}$ then
\begin{equation}\label{SaprBessel}
F_B[\Delta_\gamma f](\xi)=-|\xi|^2F_B[f](\xi).
\end{equation}
\end{lemma}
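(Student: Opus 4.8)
The plan is to reduce the multidimensional identity to a one-dimensional one coordinate-by-coordinate, and there to move the Bessel operator onto the kernel $j_{\frac{\gamma_i-1}{2}}(x_i\xi_i)$ by integration by parts, using its eigenfunction property \eqref{BessBess}. First I would use the linearity of $F_B$ and $\triangle_\gamma=\sum_{i=1}^n (B_{\gamma_i})_{x_i}$ to write
\begin{equation*}
F_B[\triangle_\gamma f](\xi)=\sum\limits_{i=1}^n F_B[(B_{\gamma_i})_{x_i}f](\xi),
\end{equation*}
so it suffices to prove $F_B[(B_{\gamma_i})_{x_i}f](\xi)=-\xi_i^2\,F_B[f](\xi)$ for each fixed $i$; summing then gives the factor $\sum_{i=1}^n(-\xi_i^2)=-|\xi|^2$. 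Since $f\in S_{ev}$ decays rapidly with all derivatives, the kernel $\mathbf{j}_\gamma(x;\xi)=\prod_{j}j_{\frac{\gamma_j-1}{2}}(x_j\xi_j)$ is bounded, and $x^\gamma$ is locally integrable (here $\gamma_j>0$), Fubini's theorem applies and I may carry out the $x_i$-integration first with the remaining variables and the product $\prod_{j\neq i}j_{\frac{\gamma_j-1}{2}}(x_j\xi_j)\,x_j^{\gamma_j}$ treated as fixed.

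Next, in the variable $x_i$ I would use the divergence form $\bigl((B_{\gamma_i})_{x_i}f\bigr)x_i^{\gamma_i}=\frac{\partial}{\partial x_i}\bigl(x_i^{\gamma_i}\frac{\partial f}{\partial x_i}\bigr)$ and integrate by parts twice over $(0,\infty)$, transferring first one and then the second derivative onto $j_{\frac{\gamma_i-1}{2}}(x_i\xi_i)$, which reconstitutes $(B_{\gamma_i})_{x_i}j_{\frac{\gamma_i-1}{2}}(x_i\xi_i)$. By \eqref{BessBess} this equals $-\xi_i^2\,j_{\frac{\gamma_i-1}{2}}(x_i\xi_i)$, and restoring the outer integrals and the factor $x_i^{\gamma_i}$ yields exactly $-\xi_i^2 F_B[f](\xi)$. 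The only thing requiring care is the vanishing of the two boundary contributions $\bigl[x_i^{\gamma_i}\frac{\partial f}{\partial x_i}\,j_{\frac{\gamma_i-1}{2}}(x_i\xi_i)\bigr]_0^\infty$ and $\bigl[x_i^{\gamma_i}\,f\,\frac{\partial}{\partial x_i}j_{\frac{\gamma_i-1}{2}}(x_i\xi_i)\bigr]_0^\infty$: at $x_i=+\infty$ they vanish because $f$ and $\partial f/\partial x_i$ are rapidly decreasing while $j_\nu$ and its derivative stay bounded; at $x_i=0$ the weight $x_i^{\gamma_i}\to 0$ since $\gamma_i>0$, and additionally $j_\nu(z)$ is bounded near $0$ with $j_\nu'(z)=O(z)$, while $\partial f/\partial x_i$ vanishes at $x_i=0$ because $f\in C^\infty_{ev}$ is even. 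Hence both boundary terms are zero.

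The main obstacle is precisely this boundary analysis together with the justification of interchanging the order of integration and of differentiating under the integral sign; once the evenness of $f$, the rapid decay in $S_{ev}$, the condition $\gamma_i>0$, and the small-argument asymptotics of the normalized Bessel function are invoked, the computation is routine. I would also note that the same argument shows that $\triangle_\gamma$ maps $S_{ev}$ into itself, so all expressions above are well defined and the identity \eqref{SaprBessel} holds pointwise in $\xi\in\mathbb{R}^n_+$.
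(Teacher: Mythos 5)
Your proposal is correct and follows essentially the same route as the paper: decompose $\triangle_\gamma$ into the one-dimensional operators $(B_{\gamma_i})_{x_i}$, integrate by parts in $x_i$ to move the divergence form onto the kernel, and apply the eigenvalue relation \eqref{BessBess} to obtain the factor $-\xi_i^2$, then sum. The only difference is that you spell out the Fubini step and the vanishing of the boundary terms (using $\gamma_i>0$, evenness of $f$, rapid decay, and the small-argument behaviour of $j_\nu$), which the paper leaves implicit.
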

\begin{proof}
We have
$$
F_B[\Delta_\gamma f](\xi)=\int\limits_{\mathbb{R}^n_+}[\Delta_\gamma f(x)]\,\mathbf{j}_\gamma(x;\xi)x^\gamma dx=
$$
$$
=\sum\limits_{i=1}^{n}\int\limits_{\mathbb{R}^n_+} \left[\frac{1}{x_i^{\gamma_i}}\frac{\partial}{\partial
x_i}x_i^{\gamma_i}\frac{\partial}{\partial x_i}f(x)\right]\,\mathbf{j}_\gamma(x;\xi)x^\gamma dx.
$$
Integrating by parts by variable $x_i$ and using formula \eqref{BessBess}, we obtain
$$
F_B[\Delta_\gamma f](\xi)=\sum\limits_{i=1}^{n}\int\limits_{\mathbb{R}^n_+} f(x)\,\left[\frac{1}{x_i^{\gamma_i}}\frac{\partial}{\partial
x_i}x_i^{\gamma_i}\frac{\partial}{\partial x_i}\mathbf{j}_\gamma(x;\xi)\right]x^\gamma dx=
$$
$$
=\sum\limits_{i=1}^{n} (-\xi_i^2)\int\limits_{\mathbb{R}^n_+} f(x)\,\mathbf{j}_\gamma(x;\xi)x^\gamma dx=-|\xi|^2\int\limits_{\mathbb{R}^n_+} f(x)\,\mathbf{j}_\gamma(x;\xi)x^\gamma dx=-|\xi|^2F_B[f](\xi).
$$
\end{proof}


\begin{lemma} We have the following formula
\begin{equation}\label{FBOF}
\frac{(F_B)_x(t^2-|x|^2)_{+,\gamma}^{\frac{k-n-|\gamma|-1}{2}}}{\Gamma\left(\frac{k-n-|\gamma|+1}{2}\right)}=\frac{t^{{k-1}}\prod\limits^n_{i=1}\Gamma\left(\frac{\gamma_i+1}{2}\right)}{\pi 2^n\Gamma\left(\frac{k+1}{2}\right)}j_{\frac{k-1}{2}}(t|x|),
\end{equation}
where $(t^2-|x|^2)_{+,\gamma}^{\frac{k-n-|\gamma|-1}{2}}$ is defined by \eqref{WF}.
\end{lemma}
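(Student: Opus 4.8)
The plan is to exploit that $(t^2-|x|^2)_{+,\gamma}^{\lambda}$ with $\lambda=\frac{k-n-|\gamma|-1}{2}$ depends on $x$ only through $r=|x|$, so that its weighted Hankel transform collapses to a one–dimensional integral. Passing to generalized polar coordinates $x=r\theta$, $\theta\in S_1^+(n)$, for which $x^{\gamma}\,dx=r^{\,n+|\gamma|-1}\theta^{\gamma}\,dr\,dS$, the transform of the regular functional \eqref{WF} becomes $\int_0^{t}(t^2-r^2)^{\lambda}r^{\,n+|\gamma|-1}\bigl(\int_{S_1^+(n)}\mathbf{j}_{\gamma}(r\theta;\xi)\theta^{\gamma}\,dS\bigr)dr$, where $\xi$ is the transform variable (written $x$ in \eqref{FBOF}). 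The first ingredient is thus the spherical identity $\int_{S_1^+(n)}\mathbf{j}_{\gamma}(r\theta;\xi)\theta^{\gamma}\,dS=|S_1^+(n)|_{\gamma}\,j_{\frac{n+|\gamma|-2}{2}}(r|\xi|)$. To establish it I would note from \eqref{BessBess} that $(\triangle_{\gamma})_{\xi}\mathbf{j}_{\gamma}(r\theta;\xi)=-r^2|\theta|^2\mathbf{j}_{\gamma}(r\theta;\xi)=-r^2\mathbf{j}_{\gamma}(r\theta;\xi)$ on $|\theta|=1$; hence the left–hand side, which depends on $\xi$ only through $\rho=|\xi|$ and equals $|S_1^+(n)|_{\gamma}$ at $\rho=0$, solves $(B_{n+|\gamma|-1})_{\rho}H=-r^2H$ (since $\triangle_{\gamma}$ reduces to $B_{n+|\gamma|-1}$ on radial functions), and by \eqref{BessBess} this forces $H(\rho)=|S_1^+(n)|_{\gamma}\,j_{\frac{n+|\gamma|-2}{2}}(r\rho)$.

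With this reduction in hand, the substitution $r=ts$ extracts $t^{\,2\lambda+n+|\gamma|}=t^{\,k-1}$ and turns the remaining integral into Sonine's first finite integral for Bessel functions (see \cite{Watson}); in normalized form it reads $\int_0^1 j_{\mu}(zs)s^{2\mu+1}(1-s^2)^{\nu}\,ds=\frac{\Gamma(\mu+1)\Gamma(\nu+1)}{2\,\Gamma(\mu+\nu+2)}\,j_{\mu+\nu+1}(z)$ (obtained from $\int_0^1 J_{\mu}(zs)s^{\mu+1}(1-s^2)^{\nu}\,ds=\frac{2^{\nu}\Gamma(\nu+1)}{z^{\nu+1}}J_{\mu+\nu+1}(z)$ by passing to $j_{\nu}$). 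Applying it with $\mu=\frac{n+|\gamma|-2}{2}$ (so that $2\mu+1=n+|\gamma|-1$), $\nu=\lambda$ and $z=t|\xi|$ raises the index to $\mu+\nu+1=\frac{k-1}{2}$, precisely the index in \eqref{FBOF}. It then remains to tidy the $\Gamma$–factors: using $\mu+1=\frac{n+|\gamma|}{2}$, $\nu+1=\frac{k-n-|\gamma|+1}{2}$, $\mu+\nu+2=\frac{k+1}{2}$ together with the value of $|S_1^+(n)|_{\gamma}$, the factor $\Gamma(\frac{n+|\gamma|}{2})$ cancels, and dividing through by $\Gamma(\frac{k-n-|\gamma|+1}{2})$ gives the right–hand side of \eqref{FBOF}.

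All of this is literally valid only where \eqref{WF} converges, i.e. for $\operatorname{Re}\lambda>-1$, equivalently $k>n+|\gamma|-1$; the remaining values of $k$ — in particular the exceptional odd negative ones — are then reached by analytic continuation. Paired against any $\varphi\in S_{ev}$, the left–hand side of \eqref{FBOF} equals $\frac{1}{\Gamma(\lambda+1)}\int_{\{x\in\mathbb{R}^n_+:\,|x|<t\}}(t^2-|x|^2)^{\lambda}(F_B\varphi)(x)\,x^{\gamma}\,dx$, which is entire in $k$ because the simple poles in $\lambda$ at $\lambda=-1,-2,\dots$ are cancelled by $1/\Gamma(\lambda+1)$; and the right–hand side is entire in $k$ since $\zeta\mapsto 1/\Gamma(\zeta)$ is entire and $j_{\nu}(z)/\Gamma(\nu+1)=\sum_{m\ge0}\frac{(-1)^m (z/2)^{2m}}{m!\,\Gamma(\nu+m+1)}$ is entire in $\nu$. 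Agreement for large $k$ then forces agreement for all $k\in\mathbb{R}$. I expect the spherical identity and this continuation/regularization step to be where the actual work lies; everything else is bookkeeping with Gamma functions.
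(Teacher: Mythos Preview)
Your argument is correct and is precisely the weighted analogue of the Gel'fand--Shilov computation the paper cites without details: pass to polar coordinates, collapse the angular integral via the spherical identity, evaluate the remaining radial integral by Sonine's first formula, and analytically continue in $\lambda$ using that $(t^2-|x|^2)_{+,\gamma}^{\lambda}/\Gamma(\lambda+1)$ is entire.

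The one soft spot is your derivation of the spherical identity. You assert that $H(\xi)=\int_{S_1^+(n)}\mathbf{j}_\gamma(r\theta;\xi)\,\theta^\gamma\,dS$ depends on $\xi$ only through $\rho=|\xi|$, and then use this to reduce $(\triangle_\gamma)_\xi$ to $(B_{n+|\gamma|-1})_\rho$; but that radiality in $\xi$ is not a priori obvious and is essentially equivalent to the identity you want. A clean fix is to run the ODE in $r$ rather than in $\xi$: since ${}^\gamma T_0^{r\theta}g=g(r\theta)$, one has $H(r,\xi)=|S_1^+(n)|_\gamma\,M_r^\gamma[\mathbf{j}_\gamma(\cdot;\xi)](0)$, and the paper's transmutation property $(B_{n+|\gamma|-1})_r M_r^\gamma=M_r^\gamma(\triangle_\gamma)_x$ together with $(\triangle_\gamma)_x\mathbf{j}_\gamma(x;\xi)=-|\xi|^2\mathbf{j}_\gamma(x;\xi)$ gives $(B_{n+|\gamma|-1})_r H=-|\xi|^2 H$ with $H(0)=|S_1^+(n)|_\gamma$, $H_r(0)=0$. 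This forces $H=|S_1^+(n)|_\gamma\,j_{\frac{n+|\gamma|-2}{2}}(r|\xi|)$ and yields the $\xi$--radiality as a byproduct. The Sonine step, the $\Gamma$--bookkeeping, and the analytic--continuation argument are all fine as written.
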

The formula \eqref{FBOF} is obtained similarly to the formula (5) from \cite{Gelfand}, p. 291.

\vskip 1cm

\begin{lemma}
Let $u =u(x,t;k)$ denote the solution of \eqref{EPD0}. Then for solutions of the equation \eqref{EPD0} the next two important recursion formulas hold
\begin{equation} \label{Rec1} u(x,t;k)=t^{1-k}u(x,t;2-k), \end{equation}
\begin{equation} \label{Rec2} u_t(x,t;k)=tu(x,t;2+k). \end{equation}
\end{lemma}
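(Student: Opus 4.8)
The plan is to reduce both identities to a one-dimensional computation in the variable $t$: since the operator $(\triangle_\gamma)_x$ acts only on $x$ and commutes with every operation performed in $t$, the whole matter comes down to how $(B_k)_t$ interacts with multiplication by a power of $t$ and with differentiation in $t$. The one ingredient I would isolate first is the Darboux-type intertwining relation
$$
(B_k)_t\bigl(t^{1-k}v(t)\bigr)=t^{1-k}(B_{2-k})_t v(t),\qquad v\in C^2(0,\infty),
$$
proved by a direct Leibniz computation: for $w=t^{1-k}v$ one has $w_t=(1-k)t^{-k}v+t^{1-k}v_t$ and $w_{tt}=-k(1-k)t^{-k-1}v+2(1-k)t^{-k}v_t+t^{1-k}v_{tt}$, and upon adding $\tfrac{k}{t}w_t$ the terms proportional to $v$ cancel, leaving exactly $t^{1-k}\bigl(v_{tt}+\tfrac{2-k}{t}v_t\bigr)$.

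For \eqref{Rec1} I would then argue as follows. Let $u(x,t;2-k)$ be a solution of \eqref{EPD0} with parameter $2-k$ and set $w(x,t)=t^{1-k}u(x,t;2-k)$. Applying the intertwining relation in $t$ and then the equation for $u(x,t;2-k)$ gives $(B_k)_t w=t^{1-k}(B_{2-k})_t u(x,t;2-k)=t^{1-k}(\triangle_\gamma)_x u(x,t;2-k)=(\triangle_\gamma)_x w$, so $w$ solves \eqref{EPD0} with parameter $k$; identifying $w$ with $u(x,t;k)$ yields \eqref{Rec1}. (Equivalently, one may first apply the Hankel transform in $x$ and use \eqref{SaprBessel} to replace \eqref{EPD0} by the ODE $\widehat u_{tt}+\tfrac{k}{t}\widehat u_t=-|\xi|^2\widehat u$, carry out the same computation there, and invert; this is the variant most in the spirit of the unified method of the paper.)

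For \eqref{Rec2} the claim is equivalent to $u(x,t;k+2)=u_t(x,t;k)/t$. I would differentiate the equation $(B_k)_t u=(\triangle_\gamma)_x u$ once in $t$; with $g:=u_t$ this produces $g_{tt}+\tfrac{k}{t}g_t-\tfrac{k}{t^2}g=(\triangle_\gamma)_x g$. Substituting $g=t\,v$, i.e. $v=u_t/t$, one computes $g_t=v+tv_t$, $g_{tt}=2v_t+tv_{tt}$; the two $\tfrac{k}{t}v$-terms cancel and what remains is $t\bigl(v_{tt}+\tfrac{k+2}{t}v_t\bigr)=t(\triangle_\gamma)_x v$, that is $(B_{k+2})_t v=(\triangle_\gamma)_x v$. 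Hence $v=u_t/t$ solves \eqref{EPD0} with parameter $k+2$, and writing it as $u(x,t;k+2)$ gives $u_t(x,t;k)=t\,u(x,t;k+2)$.

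There is no genuine analytic obstacle here — both statements are verifications — so the only points worth a remark are that $u$ must be assumed smooth enough in $t>0$ to differentiate \eqref{EPD0} in the proof of \eqref{Rec2}, and that these formulas should be understood as transformations between the families of solutions attached to the various parameters, the behaviour at $t=0$ (the factor $t^{1-k}$, the division by $t$) and the matching of the Cauchy data \eqref{USL0} being a separate matter. The only mildly delicate bookkeeping is making sure the coefficient comes out as $k+2$ after the substitution $g=tv$; everything else is immediate from the Leibniz rule.
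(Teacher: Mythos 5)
Your proof is correct. Note, though, that the paper does not actually prove this lemma at all: it simply observes that \eqref{Rec1} and \eqref{Rec2} are particular cases of A.~Weinstein's general recursion relations, valid for any equation $u_{tt}+\frac{k}{t}u_t=X(u)$ with $X$ an operator not involving $t$, and refers to \cite{Fox} for that. What you have written is exactly the computation lying behind that citation: the Leibniz verification of $(B_k)_t\bigl(t^{1-k}v\bigr)=t^{1-k}(B_{2-k})_t v$ (your coefficients check out, the $v$-terms cancel and $2(1-k)+k=2-k$), and, for \eqref{Rec2}, differentiation of the equation in $t$ followed by the substitution $u_t=tv$, which correctly produces $(B_{k+2})_t v=(\triangle_\gamma)_x v$. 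Since your argument uses only that $(\triangle_\gamma)_x$ commutes with multiplication by powers of $t$ and with $\partial_t$ — i.e.\ only its $t$-independence — it in fact establishes the general Weinstein statement the paper invokes, not just the special case $X=\triangle_\gamma$; so your route buys a self-contained proof at the cost of a page of elementary computation, while the paper's buys brevity by citation. Your closing remarks are also the right ones to make: the identities should be read as correspondences between the solution families for different parameters (with enough $t$-smoothness to differentiate the equation for \eqref{Rec2}), and the matching of the Cauchy data \eqref{USL0} is a separate bookkeeping step — indeed it is precisely that step which later produces the constants $(k+1)(k+3)\cdots(k+2m-1)$ in the paper's Theorem~3.2.
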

It is  particular cases of A. Weinstein's formulas which state that for any equation of the form $u_{tt}+\frac{k}{t}u_t=X(u)$, in which $X$ is an operator does not depend on $t$ the relations  \eqref{Rec1} and \eqref{Rec2} hold true (see \cite{Fox}).

\vskip 1cm

\begin{lemma} The weighted spherical mean $M^\gamma_t[f(x)]$ is the transmutation operator (cf. \cite{Car1}--\cite{Car3}, \cite{Sit1}--\cite{Sit2}) intertwining $(\Delta_\gamma)_x$ and $(B_{n+|\gamma|-1})_t$ for the
$f\in C^2_{ev}$:
\begin{equation}\label{OPPr}
(B_{n+|\gamma|-1})_tM^\gamma_t[f(x)]=M^\gamma_t[(\Delta_\gamma)_xf(x)].
\end{equation}
\end{lemma}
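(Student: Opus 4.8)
The plan is to prove the intertwining relation \eqref{OPPr} by transferring everything to the Hankel side, where the singular operators become multiplication operators and the spherical mean becomes multiplication by a normalized Bessel function. First I would apply the multidimensional Hankel transform $(F_B)_x$ to both sides of \eqref{OPPr} in the variable $x$. For the right-hand side, Lemma~1 (formula \eqref{SaprBessel}) gives $(F_B)_x[(\Delta_\gamma)_x f](\xi)=-|\xi|^2\widehat f(\xi)$, and since $M^\gamma_t$ acts by the generalized translation and integration over the sphere in $x$, I expect the key identity
$$
(F_B)_x\bigl[M^\gamma_t[f(x)]\bigr](\xi)=j_{\frac{n+|\gamma|-1}{2}}(t|\xi|)\,\widehat f(\xi),
$$
i.e.\ the weighted spherical mean is, on the Hankel side, multiplication by the normalized Bessel function $j_{(n+|\gamma|-1)/2}(t|\xi|)$. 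This is the analogue of the classical fact that ordinary spherical means transform to $j_{(n-2)/2}$ under the Fourier transform, and it is the technical heart of the argument.

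Next I would verify this key identity directly: write out $M^\gamma_t[f(x)]$ from \eqref{05}, apply $(F_B)_x$, and use that the generalized translation $^\gamma T^y$ is self-adjoint and multiplicative with respect to $\mathbf j_\gamma$, namely $(F_B)_x[\,^\gamma T^{y}f(x)](\xi)=\mathbf j_\gamma(y;\xi)\widehat f(\xi)$ (this is the product of the one-dimensional translation formulas, which follow from the same integration-by-parts/\eqref{BessBess} computation used in Lemma~1). This reduces the problem to computing
$$
\frac{1}{|S_1^+(n)|_\gamma}\int\limits_{S_1^+(n)}\mathbf j_\gamma(t\theta;\xi)\,\theta^\gamma\,dS
=\frac{1}{|S_1^+(n)|_\gamma}\int\limits_{S_1^+(n)}\prod_{i=1}^n j_{\frac{\gamma_i-1}{2}}(t\theta_i\xi_i)\,\theta^\gamma\,dS,
$$
and this weighted average of a product of Bessel functions over $S_1^+(n)$ is a known "weighted Gegenbauer/Catlani-type" addition formula that collapses to $j_{\frac{n+|\gamma|-1}{2}}(t|\xi|)$; I would either cite it from the Kipriyanov-school literature already referenced (e.g.\ \cite{Kipr}, \cite{Levitan}) or derive it by diagonalizing with respect to the radial/angular structure.

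Finally, granting the key identity, the left side of \eqref{OPPr} transforms as
$$
(F_B)_x\bigl[(B_{n+|\gamma|-1})_t M^\gamma_t[f(x)]\bigr](\xi)
=(B_{n+|\gamma|-1})_t\Bigl(j_{\frac{n+|\gamma|-1}{2}}(t|\xi|)\,\widehat f(\xi)\Bigr)
=-|\xi|^2 j_{\frac{n+|\gamma|-1}{2}}(t|\xi|)\,\widehat f(\xi),
$$
where the last equality is exactly \eqref{BessBess} with $\nu=n+|\gamma|-1$ and $\tau=|\xi|$; this coincides with the Hankel transform of the right side computed above. Since $f\in C^2_{ev}$ (with enough decay, or first for $f\in S_{ev}$ and then by a density/approximation argument) the Hankel transform is injective on the relevant class, so the two sides of \eqref{OPPr} agree. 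The initial conditions $M^\gamma_0[f]=f$, $\partial_t M^\gamma_t[f]|_{t=0}=0$ from \eqref{MeanCond} confirm that $M^\gamma_t$ genuinely plays the role of a transmutation (Poisson-type) operator. The main obstacle is establishing the key identity, specifically the evaluation of the weighted integral of $\prod_i j_{\frac{\gamma_i-1}{2}}(t\theta_i\xi_i)$ over $S_1^+(n)$; everything else is a mechanical transfer through $F_B$ using Lemmas~1 and the elementary relation \eqref{BessBess}.
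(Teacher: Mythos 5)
Your route is genuinely different from the paper's. The paper never touches the Hankel transform in this lemma: it integrates $f$ over the part of a ball in spherical coordinates, applies $(\Delta_\gamma)_x$, converts the volume integral to a surface integral by the weighted Green formula \eqref{EQ18}, and differentiates in $t$ to obtain the Darboux-type identity \eqref{Eq20}, $(\Delta_\gamma)_x M^\gamma_t[f]=(B_{n+|\gamma|-1})_t M^\gamma_t[f]$; the relation \eqref{OPPr} then follows from commutation of $B_{\gamma_i}$ with the generalized translation. That argument is pointwise and works directly for $f\in C^2_{ev}$ with no decay assumptions. Your plan instead diagonalizes everything with $F_B$, which is clean and buys a one-line verification once the multiplier of $M^\gamma_t$ is known, but it shifts the entire weight of the proof onto two things you do not actually establish: the evaluation of the weighted spherical integral of $\mathbf j_\gamma(t\theta;\xi)$ (you only assert it as a "known addition formula"; it is indeed standard in the Kipriyanov school, cf. \cite{Kipr}, \cite{Levitan}, \cite{LyahovKniga}, but it is the technical heart and must be cited precisely or derived), and the passage from $S_{ev}$, where $F_B$ and its inversion/injectivity are available, to the stated class $C^2_{ev}$, which requires a localization argument (possible, since $^\gamma T^{t\theta}$ has finite domain of dependence, but it needs to be said).

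There is also a concrete index error that makes the proposal internally inconsistent as written. The correct multiplier is
\begin{equation*}
(F_B)_x\bigl[M^\gamma_t[f(x)]\bigr](\xi)=j_{\frac{n+|\gamma|}{2}-1}(t|\xi|)\,\widehat f(\xi),
\end{equation*}
i.e.\ Bessel order $\tfrac{n+|\gamma|-2}{2}$, not $\tfrac{n+|\gamma|-1}{2}$. Indeed \eqref{BessBess} with $\nu=n+|\gamma|-1$ states that the eigenfunction of $(B_{n+|\gamma|-1})_t$ is $j_{\frac{\nu-1}{2}}=j_{\frac{n+|\gamma|-2}{2}}$; the function $j_{\frac{n+|\gamma|-1}{2}}(t|\xi|)$ that you wrote is annihilated (up to $-|\xi|^2$) by $(B_{n+|\gamma|})_t$, not by $(B_{n+|\gamma|-1})_t$, so your final step "apply \eqref{BessBess} with $\nu=n+|\gamma|-1$" does not apply to the function you produced. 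The slip is forced into view by consistency with the classical case $j_{\frac{n-2}{2}}$ and is easily repaired, after which your argument goes through, but as stated the key identity and the eigenvalue relation contradict each other by a half-integer shift of the order.
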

\begin{proof} First of all we note that the  function
$f\in C^2_{ev}$  satisfies the relation
 \begin{equation}\label{EQ15}
\int\limits_{B_t^+(n)}\,f(x)\,x^\gamma\, dx=\int\limits_0^t \,\,\lambda^{n+|\gamma|-1}\,\,d\lambda \int\limits_{S_1^+(n)}f(\lambda\theta )\,\theta^\gamma dS_\theta,
 \end{equation}
which can be easy obtained by  passing to spherical coordinates  $x=\lambda \theta$, $|\theta|=1$ in the left hand of \eqref{EQ15}.
From \eqref{EQ15} we get
\begin{eqnarray}\label{EQ16}
|S^+_1(n)|_\gamma\int\limits_0^t \lambda^{n+|\gamma|-1}M^\gamma_\lambda[f(x)] d\lambda=
\int\limits_0^t \lambda^{n+|\gamma|-1}d\lambda
 \int\limits_{S^+_1(n)}(T^{\lambda y}f)(x)\,\,y^\gamma dS_y=\nonumber\\
 =\int\limits_{B^+_r(n)}(T^{z}f)(x)z^\gamma dz.
\end{eqnarray}
Let us apply the operator $\Delta_{\gamma}$ to both sides of the relation \eqref{EQ16} with respect to $x$, then we obtain
$$
|S^+_1(n)|_\gamma \sum\limits_{i=1}^n\int\limits_0^t \lambda^{n+|\gamma|-1}B_{\gamma_i}\,M^\gamma_\lambda[f(x)]d\lambda =\sum\limits_{i=1}^n\int\limits_{B^+_t(n)}(B_{\gamma_i})_{x_i}(T^{z}f)(x)z^\gamma dz=
$$
\begin{equation}\label{EQ17}
=\sum\limits_{i=1}^n\int\limits_{B^+_t(n)}(B_{\gamma_i})_{z_i}T_x^{z}f(x)z^\gamma dz.
\end{equation}
We have the next Green formula
\begin{equation}\label{EQ18}
\int\limits_{\overline{\Omega}\,^+} (v\,\Delta_\gamma w-w\,\Delta_\gamma v)\,\,x^\gamma dx=
\int\limits_{\Gamma=\partial\overline{\Omega}^+}\left(v \frac{\partial w}{\partial \vec{\nu}}-
w \frac{\partial v}{\partial \vec{\nu}}\right) x^\gamma\,d\Gamma_x\,,
\end{equation}
where $w,v\in C^2_{ev}(\overline{\Omega}\,^+)$, $\vec{\nu}$ is the outward normal to the boundary $\Gamma=\partial\overline{\Omega}\,^+$ of the  $\overline{\Omega}\,^+$.
This formula was presented in \cite{Shi5}.

 By applying formula
 \eqref{EQ18} to the right-hand side of relation \eqref{EQ17}, we obtain
$$\sum\limits_{i=1}^n\int\limits_{B^+_t(n)}(B_{\gamma_i})_{z_i}(T^{z}f)(x)z^\gamma dz=
\sum\limits_{i=1}^n\int\limits_{S^+_t(n)}\frac{\partial}{\partial z_i}(T^{z}f)(x)  \,\cos(\vec{\nu},\vec{e}_i)\,z^\gamma \,dS_z,
$$
where $\vec{e}_i$ is the direction of the axis $Oz_i$, $i=1,...,n$.

Now, by using the fact that the direction of the outward normal to the boundary of a ball
with center the origin coincides with the direction of the position vector of the point on the ball,
we obtain the relation
$$
\sum\limits_{i=1}^n\int\limits_{B^+_t(n)}(B_{\gamma_i})_{z_i}(T^{z}f)(x)z^\gamma dz= t^{n+|\gamma|-1}\int\limits_{S^+_1(n)}\frac{\partial}{\partial t}(T^{t\theta}f)(x) \,\theta^\gamma dS_\theta=$$
$$=
|S_1^+(n)|_\gamma t^{n+|\gamma|-1}\frac{\partial}{\partial t}M^\gamma_t [f(x)].
$$
Returning to \eqref{EQ17}, we obtain
\begin{equation}\label{EQ19}
\sum\limits_{i=1}^n\int\limits_0^t \lambda^{n+|\gamma|-1}B_{\gamma_i}M^\gamma_\lambda[f(x)]d\lambda =t^{n+|\gamma|-1}\frac{\partial}{\partial t}M^\gamma_t[f(x)].
\end{equation}
By differentiating relation \eqref{EQ19} with respect to $t$, we obtain
$$
\sum\limits_{i=1}^n t^{n+|\gamma|-1}B_{\gamma_i}M^\gamma_t [f(x)] =(n+|\gamma|-1)t^{n+|\gamma|-2}\frac{\partial}{\partial t}M^\gamma_t [f(x)]+t^{n+|\gamma|-1}\frac{\partial^2}{\partial t^2}M^\gamma_t [f(x)]
$$
or
$$
\sum\limits_{i=1}^n B_{\gamma_i}M^\gamma_t[ f(x)] =\frac{n+|\gamma|-1}{t}\frac{\partial}{\partial t}M^\gamma_t[f(x)]+\frac{\partial^2}{\partial t^2}M^\gamma_t[f(x)],
$$
and so
\begin{equation}\label{Eq20}
(\Delta_\gamma)_x M^\gamma_t[ f(x)] =\frac{n+|\gamma|-1}{t}\frac{\partial}{\partial t}M^\gamma_t [f(x)]+\frac{\partial^2}{\partial t^2}M^\gamma_t[ f(x)].
\end{equation}

Now let consider $(\Delta_\gamma)_x M^\gamma_t[f(x)]$. Using the commutativity of $B_{\gamma_i}$ and $T_{x_i}^{t\theta_i}$ (see \cite{Kipr}) we obtain
$$
(\Delta_\gamma)_x M^\gamma_t[f(x)]= \frac{1}{|S_1^+(n)|_\gamma}\,(\Delta_\gamma)_x\,\int\limits_{S^+_1(n)}\,^\gamma T_x^{t\theta}f(x)
\theta^\gamma dS_\theta=
$$
$$=\frac{1}{|S_1^+(n)|_\gamma}\int\limits_{S^+_1(n)}\,^\gamma T_x^{t\theta}[(\Delta_\gamma)_x f(x)]
\theta^\gamma dS_\theta= M^\gamma_t [(\Delta_\gamma)_x f(x)].
$$
which with \eqref{Eq20} gives \eqref{OPPr}.
\end{proof}
The similarly proof also can be found in \cite{Shi5}.

\section{Transmutation method}

An important and powerful approach to differential equations with Bessel operators is based on application of the transmutation theory. This method is essential in the study of singular problems with use of special classes of transmutations such as Sonine, Poisson, Buschman--Erd\'elyi ones and different forms of fractional integrodifferential operators, cf. \cite{CSh}--\cite{Car3}, \cite{Sit6}--\cite{Sit8}, \cite{Kra}, \cite{Sit1}--\cite{Sit5}.

In this section we show how transmutation method can be used to deduce the solution of the
Cauchy problem for the general Euler--Poisson--Darboux equation
\begin{equation} \label{GEQ142} (B_k)_tu=(\Delta _{\gamma })_x u, \quad u=u(x,t;k),\qquad x\in\mathbb{R}^n_+,\quad t>0,\quad k\in\mathbb{R},
 \end{equation}
\begin{equation} \label{GEQ143} u(x,0;k)=f(x),\quad \quad u_{t} (x,0;k)=0.
\end{equation}

\begin{theorem} Let $f=f(x)$, $x\in\mathbb{R}^n_+$ be twice continuous differentiable function even with respect of each variable. Then for the case $k> n+|\gamma |-1{\kern 1pt} $ the solution of \eqref{GEQ142}--\eqref{GEQ143} is
 \begin{equation}\label{Sol1}
u(x,t;k){=}
\frac{2^{n}\Gamma\left(\frac{k+1}{2}\right)}{\Gamma\left(\frac{k{-}n{-}|\gamma|{+}1}{2}\right)\prod\limits_{i=1}^n{\Gamma\left(\frac{\gamma_i{+}1}{2}\right)}}
\int\limits_{B_{1}^{+} (n)}[\,^{\gamma}T^{ty}f(x)](1{-}|y|^2)^{\frac{k{-}n{-}|\gamma|{-}1}{2}}y^\gamma dy.
\end{equation}
The solution of the problem \eqref{GEQ142}--\eqref{GEQ143} for the $k{=}n{+}|\gamma |{-}1$ is the weighted spherical mean  $M^\gamma_t[f(x)]$.
\end{theorem}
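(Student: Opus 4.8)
The plan is to reduce the Cauchy problem \eqref{GEQ142}--\eqref{GEQ143} to the already-established case $k=n+|\gamma|-1$, where the solution is the weighted spherical mean, by superposing these means against the appropriate measure in the radial variable. Concretely, for $k>n+|\gamma|-1$ I would look for the solution in the form
\begin{equation}\label{ansatz}
u(x,t;k)=c_k\int\limits_0^1 M^\gamma_{tr}[f(x)]\,(1-r^2)^{\frac{k-n-|\gamma|-1}{2}}\,r^{n+|\gamma|-1}\,dr,
\end{equation}
with a normalizing constant $c_k$ to be fixed. Passing to spherical coordinates $y=r\theta$, $|\theta|=1$, in \eqref{Sol1} and using the definition \eqref{05} of $M^\gamma_t$ shows that \eqref{ansatz} is exactly \eqref{Sol1} up to the constant; so the two formulations are equivalent and it suffices to verify \eqref{ansatz}.

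First I would check the initial conditions. By \eqref{MeanCond}, $M^\gamma_0[f(x)]=f(x)$, so at $t=0$ the integral in \eqref{ansatz} becomes $f(x)\int_0^1(1-r^2)^{\frac{k-n-|\gamma|-1}{2}}r^{n+|\gamma|-1}dr$; this integral is a Beta integral, finite precisely because $k>n+|\gamma|-1$, and evaluating it together with the choice of $c_k$ forces $u(x,0;k)=f(x)$. The condition $u_t(x,0;k)=0$ follows because $\frac{\partial}{\partial t}M^\gamma_{tr}[f(x)]\big|_{t=0}=0$ from \eqref{MeanCond} (the chain rule brings down a factor $r$, which is harmless), and differentiation under the integral sign is justified by the smoothness of $M^\gamma_{tr}[f]$ in $t$ for $f\in C^2_{ev}$ and the integrability of the weight.

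Next I would verify that \eqref{ansatz} satisfies the equation \eqref{GEQ142}. Apply $(\Delta_\gamma)_x$ under the integral; by Lemma on transmutation (formula \eqref{OPPr}) with the scaling $t\mapsto tr$, we have $(\Delta_\gamma)_x M^\gamma_{tr}[f(x)]=(B_{n+|\gamma|-1})_{tr}M^\gamma_{tr}[f(x)]$. The remaining task is to transform the operator $(B_{n+|\gamma|-1})$ acting in the scaled variable $\tau=tr$ into the operator $(B_k)_t$ acting outside the integral, by integration by parts in $r$. Writing $g(t,r)=M^\gamma_{tr}[f(x)]$, one has $\tau$-derivatives related to $r$-derivatives via $\partial_\tau=\frac1t\partial_r$ and to $t$-derivatives via $\partial_\tau=\frac1r\partial_t$; substituting and integrating by parts twice against the weight $(1-r^2)^{\frac{k-n-|\gamma|-1}{2}}r^{n+|\gamma|-1}$ should produce exactly $(B_k)_t$ acting on $u$, with all boundary terms at $r=0$ and $r=1$ vanishing (at $r=1$ because the exponent $\frac{k-n-|\gamma|-1}{2}$ and enough of its derivatives vanish there when $k>n+|\gamma|-1$; at $r=0$ because of the factor $r^{n+|\gamma|-1}$ and the evenness/smoothness of $M^\gamma_{tr}[f]$). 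This is the classical Weinstein-type computation, and it is consistent with the recursion formulas \eqref{Rec1}--\eqref{Rec2} of the preceding lemma, which one may alternatively invoke to organize the bookkeeping.

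The main obstacle I expect is precisely this integration-by-parts step: keeping track of the two distinct roles of $r$ (as a scaling of the spherical radius inside $M^\gamma$, and as the variable of integration with the Beta-type weight) and confirming that the weight's exponent is tuned so that the cross terms assemble into $u_{tt}+\frac{k}{t}u_t$ and not some other second-order operator. A clean way to avoid the worst of the bookkeeping is to pass to the Hankel transform in $x$: by the first Lemma, $\widehat{u}(\xi,t;k)$ must solve the ordinary differential equation $(B_k)_t\widehat{u}=-|\xi|^2\widehat{u}$ with $\widehat{u}(\xi,0;k)=\widehat f(\xi)$, $\widehat u_t(\xi,0;k)=0$, whose bounded solution even in $t$ is $\widehat u(\xi,t;k)=\widehat f(\xi)\,j_{\frac{k-1}{2}}(t|\xi|)$ by \eqref{BessBess}; then one applies Lemma with formula \eqref{FBOF} to recognize $j_{\frac{k-1}{2}}(t|\xi|)$ as the Hankel transform of the explicit kernel $(t^2-|x|^2)_{+,\gamma}^{\frac{k-n-|\gamma|-1}{2}}$, and the convolution theorem for $F_B$ turns the product $\widehat f(\xi)\,j_{\frac{k-1}{2}}(t|\xi|)$ back into the generalized convolution of $f$ with that kernel, which after the change of variables $x\to ty$ is exactly \eqref{Sol1}. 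Finally, the case $k=n+|\gamma|-1$ is immediate: there the ansatz degenerates, and \eqref{OPPr} together with \eqref{MeanCond} shows directly that $M^\gamma_t[f(x)]$ solves \eqref{GEQ142}--\eqref{GEQ143}.
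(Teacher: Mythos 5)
Your plan is correct in substance, but it takes a genuinely different route from the paper for the main case. The paper first \emph{derives} \eqref{Sol1} by a descent argument: it adjoins an extra Bessel variable $x_{n+1}$ with parameter $\gamma_{n+1}'=k-n-|\gamma|$, uses the already-settled case (the weighted spherical mean in $n+1$ variables, valid for $k=(n+1)+|\gamma'|-1$), chooses data independent of $x_{n+1}$, and collapses the integral over $S_1^+(n+1)$ onto the ball $B_1^+(n)$, which is exactly where the weight $(1-|y|^2)^{\frac{k-n-|\gamma|-1}{2}}$ comes from; this covers $k>n+|\gamma|$, and the remaining strip $n+|\gamma|-1<k\le n+|\gamma|$ is handled by direct substitution of \eqref{Sol1} into \eqref{GEQ142}, using commutation of $B_{\gamma_i}$ with the generalized translation and the Green formula \eqref{EQ18} on $B_t^+(n)$, with the initial conditions taken from Levitan's formulas. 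You instead verify the stated formula directly, rewriting it as a Beta-weighted radial superposition of weighted spherical means and using the transmutation property \eqref{OPPr} together with an integration by parts in $r$; your treatment of the initial data (Beta-integral normalization plus \eqref{MeanCond}) and of the case $k=n+|\gamma|-1$ coincides with the paper's. Your fallback via the Hankel transform (\eqref{SaprBessel}, \eqref{FBOF}, convolution) is not really an alternative to the paper so much as its Section 4 proof: Theorem 4.1 and its corollary recover \eqref{Sol1} in exactly that way. What your route buys is a one-step verification valid on the whole range $k>n+|\gamma|-1$, without the paper's ``derive for $k>n+|\gamma|$, then extend'' structure; what the descent buys is that the formula is produced rather than checked.

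Since you flag the integration by parts as the main risk, note that it does close, and a single integration by parts suffices. With $g(\tau)=M^\gamma_\tau[f(x)]$, $m=n+|\gamma|-1$, $\alpha=\tfrac{k-m-2}{2}$ and $u=c_k\int_0^1 g(tr)(1-r^2)^\alpha r^m\,dr$, the transmutation identity \eqref{OPPr} gives
\[
(B_k)_t u-(\Delta_\gamma)_x u=\frac{c_k}{t}\int\limits_0^1\Bigl[(r^2-1)\,t\,g''(tr)+\Bigl(kr-\frac{m}{r}\Bigr)g'(tr)\Bigr](1-r^2)^\alpha r^m\,dr,
\]
and since $\frac{d}{dr}\bigl[-(1-r^2)^{\alpha+1}r^m\bigr]=(1-r^2)^\alpha r^{m-1}(kr^2-m)$ (because $2\alpha+2+m=k$), integrating the first term by parts cancels the second term exactly; the boundary contribution vanishes at $r=0$ thanks to the factor $r^m$ and at $r=1$ because $\alpha+1=\tfrac{k-n-|\gamma|+1}{2}>0$, i.e.\ precisely under the hypothesis $k>n+|\gamma|-1$. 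So your outline, carried out, yields a complete proof; the only corrections are that one integration by parts (not two) is needed, and that differentiation under the integral sign should be stated for $f\in C^2_{ev}$, which is straightforward since the weight is integrable and the sphere is compact.
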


\begin{proof}
Using Lemma 4 we obtain  that the
 weighted spherical mean
 of any twice continuously differentiable function
$f=f(x)$ even with respect to each of the independent variables $x_{1} ,\ldots ,x_{n} $ on $\mathbb{R}_{+}^n$ satisfies the general  Euler--Poisson--Darboux equation
$$
(B_{k} )_{t} M^\gamma_t[f(x)]=(\Delta _{\gamma } )_{x} M^\gamma_t[f(x)]{\kern 1pt} ,\quad k=n+|\gamma |-1{\kern 1pt}
$$
and initial conditions (see \eqref{MeanCond})
$$ M^\gamma_0[f(x)]=f(x),\quad M^\gamma_t[f(x)]\biggr|_{t=0}=0.
$$
It means the the weighted spherical mean $M^\gamma_t[f(x)]$ is the solution of the problem \eqref{GEQ142}--\eqref{GEQ143} for the $k=n+|\gamma |-1$.

In order to obtain the solution of \eqref{GEQ142}--\eqref{GEQ143}
for   $k>n+|\gamma |-1$ we will use the method of descent.
First, we will seek solution of the Cauchy problem \eqref{GEQ142}--\eqref{GEQ143}
for the case $k>n+|\gamma|$.

Let $\gamma'=(\gamma_1,...,\gamma_n,\gamma_{n+1}')$, $\gamma_{n+1}'>0$, $x'=(x_{1} ,...,x_{n+1})\in\mathbb{R}^{n+1}_+$
and
$$
(\Delta_{\gamma'})_{x'}=(B_{\gamma_1})_{x_{1} } +...+(B_{\gamma_n})_{x_{n} } +(B_{\gamma_{n+1}'})_{x_{n+1} }.
$$
 Consider the equation of the type \eqref{GEQ142}
$$
(B_k)_tu=(\Delta_{\gamma'})_{x'}u,\quad u=u(x',t;k),\quad x'\in\mathbb{R}^{n+1}_+,\quad t>0
$$
with the initial conditions
$$
u(x',0;k)=f_{1}(x') ,\quad \quad u_{t} (x',0;k)=0.
$$
When $k=n+|\gamma'|=n+|\gamma|+\gamma_{n+1}'$ the
 weighted spherical mean $M^\gamma_t[f_1(x')]$  is a solution of this Cauchy problem:
$$ u(x',t;k)=
$$
\begin{equation} \label{GEQ144}
=\frac{1}{|S_{1}^{+} (n+1)|_{\gamma'} } \int_{S_{1}^{+} (n+1)} [\,^{\gamma_1}T^{ty_{1} } ...\,\,^{\gamma_n}T^{ty_{n} } \,^{\gamma_{n+1}'}T^{ty_{n+1} } f_1(x)] (y')^{\gamma'} dS_{y'},
 \end{equation}
 $$
  y'=(y_1,...,y_n,y_{n+1}')\in\mathbb{R}^{n+1}_+,
 $$
 $$
 |S_{1}^{+} (n+1)|_{\gamma'}=\frac{\prod\limits_{i=1}^n{\Gamma\left(\frac{\gamma_i{+}1}{2}\right)}\Gamma\left(\frac{\gamma_{n+1}'{+}1}{2}\right)}{2^{n}\Gamma\left(\frac{n{+}1{+}|\gamma|+\gamma_{n+1}'}{2}\right)}=\frac{\prod\limits_{i=1}^n{\Gamma\left(\frac{\gamma_i{+}1}{2}\right)}\Gamma\left(\frac{k-n-|\gamma|{+}1}{2}\right)}{2^{n}\Gamma\left(\frac{k+1}{2}\right)}.
 $$
Let us put $f_{1} (x_{1} ,...,x_{n} ,0)=f(x_{1} ,...,x_{n} ),$ where
 $f$ is the function which appears in  initial conditions \eqref{GEQ143}. In this way the $u$ defined by \eqref{GEQ144} becomes a function only of $x_{1} ,...,x_{n} $ which satisfies equation \eqref{GEQ142} and  initial conditions \eqref{GEQ143}.
We have
$$
u(x,t;k)=\frac{1}{|S_{1}^{+} (n+1)|_{\gamma'} } \int\limits_{S_{1}^{+} (n+1)} [\,^{\gamma}T^{ty}f(x)](y')^{\gamma'} dS_{y'},\qquad\gamma_{n+1}'=k-n-|\gamma|.
$$
Now we rewrite the integral over the part of the sphere $S_{1}^{+} (n+1)$ as an integral over the part of ball $B_{1}^{+} (n){=}\{y{\in}\mathbb{R}^{n}_+{:}\sum\limits_{i=1}^{n} y_i^2\leq 1\}$.
We write the surface integral over multiple integral:
$$
\int\limits_{S_{1}^{+} (n+1)} [\,^{\gamma}T^{ty}f(x)](y')^{\gamma'} dS_{y'}
=\int\limits_{B_{1}^{+} (n)}[\,^{\gamma}T^{ty}f(x)](1-y_1^2-...-y_{n}^2)^{\frac{\gamma_{n+1}'-1}{2}}y^\gamma dy=
$$
$$
=\int\limits_{B_{1}^{+} (n)}[\,^{\gamma}T^{ty}f(x)](1-|y|^2)^{\frac{k-n-|\gamma|-1}{2}}y^\gamma dy,
$$
where $B_{1}^{+} (n)$ is a projection of the  $S_{1}^{+} (n+1)$ on the equatorial plane $x_{n+1}=0$.
We have
\begin{equation}\label{Sol1}
u(x,t;k){=}\frac{2^{n}\Gamma\left(\frac{k+1}{2}\right)}{\prod\limits_{i=1}^n{\Gamma\left(\frac{\gamma_i{+}1}{2}\right)}\Gamma\left(\frac{k{-}n{-}|\gamma|{+}1}{2}\right)}
 \int\limits_{B_{1}^{+} (n)}[\,^{\gamma}T^{ty}f(x)](1{-}|y|^2)^{\frac{k{-}n{-}|\gamma|{-}1}{2}}y^\gamma dy.
\end{equation}

Although \eqref{Sol1} was obtained as the solution of the problem  \eqref{GEQ142}--\eqref{GEQ143} for the case $k{>}n{+}|\gamma |$ the integral on its right-hand side converges and for $k>n+|\gamma |-1$. We can verify by direct substitution
 \eqref{Sol1} in \eqref{GEQ142}--\eqref{GEQ143}  that \eqref{Sol1} satisfies the differential equation \eqref{GEQ142} and the initial conditions \eqref{GEQ143} for all values of $k$ which are greater than $n+|\gamma |-1$.
Let show it. Changing coordinates from $y$ to $y/t$ and using that $(B_{\gamma_i})_{x_i}\,^{\gamma_i}T_{x_i}^{y_i}=(B_{\gamma_i})_{y_i}\,^{\gamma_i}T_{x_i}^{y_i}$ (see \cite{Levitan}) we obtain
$$
I=(\triangle_\gamma)_x\int\limits_{B_{1}^{+} (n)}[\,^{\gamma}T^{ty}f(x)](1-|y|^2)^{\frac{k-n-|\gamma|-1}{2}}y^\gamma dy=
$$
$$
=\sum\limits_{i=1}^n (B_{\gamma_i})_{x_i}\int\limits_{B_{1}^{+} (n)}[\,^{\gamma}T^{ty}f(x)](1-|y|^2)^{\frac{k-n-|\gamma|-1}{2}}y^\gamma dy=
$$
$$
=t^{1-k}\sum\limits_{i=1}^n \int\limits_{B_{t}^{+} (n)}[(B_{\gamma_i})_{x_i}\,^{\gamma}T^{y}f(x)](t^2-|y|^2)^{\frac{k-n-|\gamma|-1}{2}}y^\gamma dy=
$$
\begin{equation}\label{RH}
=t^{1-k}\sum\limits_{i=1}^n \int\limits_{B_{t}^{+} (n)}[(B_{\gamma_i})_{y_i}\,^{\gamma}T^{y}f(x)](t^2-|y|^2)^{\frac{k-n-|\gamma|-1}{2}}y^\gamma dy,
\end{equation}
where $B_{t}^{+} (n){=}\{y{\in}\mathbb{R}^{n}_+{:}\sum\limits_{i=1}^{n} y_i^2\leq t\}.$

For  integrable over the $\overline{\Omega}\,^+$ functions $w,v{\in}C^2_{ev}(\overline{\Omega}\,^+)$ we have the Green formula \eqref{EQ18}.  By applying formula \eqref{EQ18} to the right--hand side of relation \eqref{RH}, we get
$$
I=t^{1-k}\sum\limits_{i=1}^n\int\limits_{S^+_t(n)}\left[\frac{\partial}{\partial y_i}\,^{\gamma}T^{y}f(x)\right](t^2-|y|^2)^{\frac{k-n-|\gamma|-1}{2}}\,\cos(\vec{\nu},\vec{e}_i)\,y^\gamma \,dS,
$$
where $\vec{e}_i$ is the direction of the axis $Oy_i$, $i = 1,...,n$, and thus $\cos(\vec{\nu},\vec{e}_i)=\frac{y_i}{t}$.
Now, by using the fact that the direction of the outward normal to the boundary of a ball with center the origin coincides with the direction of the position vector of the point on the ball, we obtain the relation
$$
I=\frac{1}{t^k}\frac{\partial}{\partial t}t^k\frac{\partial}{\partial t}\int\limits_{B^+_1(n)}\left[\,^{\gamma}T^{ty}f(x)\right](1-|y|^2)^{\frac{k-n-|\gamma|-1}{2}}\,y^\gamma \,dy.
$$
Given that $\frac{1}{t^k}\frac{\partial}{\partial t}t^k\frac{\partial}{\partial t}=(B_k)_t$ and \eqref{RH} we have
$$
(\triangle_\gamma)_x\int\limits_{B_{1}^{+} (n)}[\,^{\gamma}T^{ty}f(x)](1-|y|^2)^{\frac{k-n-|\gamma|-1}{2}}y^\gamma dy=
$$
$$
=(B_k)_t\int\limits_{B^+_1(n)}\left[\,^{\gamma}T^{ty}f(x)\right](1-|y|^2)^{\frac{k-n-|\gamma|-1}{2}}\,y^\gamma \,dy.
$$
It means that $u(x,t;k)$ defined by the formula \eqref{Sol1} indeed satisfies equation \eqref{GEQ142}  for $k{>}n+|\gamma |-1$.
Validity of the first and the second initial conditions follows from the formulas (5.20) and (5.21) from \cite{Levitan} respectively.
\end{proof}

\begin{theorem}
 Let $f=f(x)$, $f\in C^{\left[\frac{n+|\gamma|-k}{2}\right]+2}_{ev}$. Then  the solution of \eqref{GEQ142}--\eqref{GEQ143}
for $k<n+|\gamma |-1$, $k\neq -1,-3,-5,...$
\begin{equation}\label{34}
u(x,t;k)=t^{1-k}\,\left(\frac{\partial}{t \partial t}\right)^m(t^{k+2m-1}u(x,t;k+2m)),
\end{equation}
where $m$ is a minimum integer such that $m\geq \frac{n+|\gamma|-k-1}{2}$ and $u(x,t;k+2m)$ is the solution of the Cauchy problem
\begin{equation}\label{32}
   (B_{k+2m})_t u=(\Delta_\gamma)_x u,
\end{equation}
\begin{equation}\label{33}
u(x,0;k+2m)=\frac{f(x)}{(k+1)(k+3){...}(k+2m-1)},\qquad u_t(x,0;k+2m)=0.
\end{equation}
\end{theorem}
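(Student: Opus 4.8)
The plan is to reduce everything to Theorem~1 by \emph{descending} in the parameter from $k+2m$ down to $k$ with the help of A.~Weinstein's recursion formulas \eqref{Rec1}--\eqref{Rec2}. The building block is the one-step identity: if $v=v(x,t)$ is a sufficiently smooth solution, even in each $x_i$, of $(B_{k+2})_tv=(\Delta_\gamma)_xv$, then
$$
u:=t^{1-k}\,\frac{\partial}{t\,\partial t}\bigl(t^{k+1}v\bigr)=(k+1)\,v+t\,v_t
$$
solves $(B_k)_tu=(\Delta_\gamma)_xu$. One way to see this is to chain the two formulas of Lemma~3: \eqref{Rec1} turns a solution for parameter $k+2$ into one for parameter $-k$ (multiplication by $t^{k+1}$), \eqref{Rec2} raises $-k$ to $2-k$ (application of $\frac1t\partial_t$), and \eqref{Rec1} again lowers $2-k$ to $k$ (multiplication by $t^{1-k}$); alternatively one simply expands $(B_k)_t\bigl((k+1)v+tv_t\bigr)$ and $(\Delta_\gamma)_x\bigl((k+1)v+tv_t\bigr)=(k+1)(B_{k+2})_tv+t\,\partial_t\bigl((B_{k+2})_tv\bigr)$ and checks that the coefficients of $v_{tt}$, $t\,v_{ttt}$ and $t^{-1}v_t$ coincide.

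Next I would iterate this identity $m$ times. By induction on $m$: given a solution $v$ of $(B_{k+2m})_tv=(\Delta_\gamma)_xv$, the one-step identity with parameter $k+2m-2$ yields a solution $\tilde v=t^{\,3-k-2m}\frac{\partial}{t\,\partial t}\bigl(t^{\,k+2m-1}v\bigr)$ of $(B_{k+2m-2})_t\tilde v=(\Delta_\gamma)_x\tilde v$; applying the inductive hypothesis (at level $m-1$, with parameter $k$) to $\tilde v$ and using that $t^{\,k+2(m-1)-1}\tilde v=\frac{\partial}{t\,\partial t}\bigl(t^{\,k+2m-1}v\bigr)$, all intermediate powers of $t$ telescope and one obtains exactly \eqref{34}. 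Since $m$ is the least integer with $m\geq\frac{n+|\gamma|-k-1}{2}$ we have $k+2m\geq n+|\gamma|-1$, so the innermost function $u(x,t;k+2m)$ is furnished by Theorem~1 (the explicit integral when $k+2m>n+|\gamma|-1$, the weighted spherical mean $M^\gamma_t$ when $k+2m=n+|\gamma|-1$), and the construction of a solution of \eqref{32} is complete.

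It remains to check the initial conditions \eqref{GEQ143}. Here I would use that a solution of $(B_\ell)_tv=(\Delta_\gamma)_xv$ with $v(x,0)=g$, $v_t(x,0)=0$ is even in $t$, so $v(x,t)=g(x)+a_2(x)t^2+a_4(x)t^4+\dots$, where, from the equation, $a_2=\frac{(\Delta_\gamma)_xg}{2(\ell+1)}$ (well defined because $\ell\neq-1$) and, more generally, $a_{2j}$ is a nonzero constant times $(\Delta_\gamma)_x^{\,j}g$. Together with $\frac{\partial}{t\,\partial t}t^{\beta}=\beta\,t^{\beta-2}$, hence $\bigl(\frac{\partial}{t\,\partial t}\bigr)^m t^{\,k+2m-1}=(k+1)(k+3)\cdots(k+2m-1)\,t^{\,k-1}$, the leading term of \eqref{34} gives $u(x,0;k)=(k+1)(k+3)\cdots(k+2m-1)\,u(x,0;k+2m)=f(x)$ precisely because of the normalization in \eqref{33}, while every higher term of the expansion acquires a factor $t^{2j}$ with $j\geq1$, so it and its $t$-derivative vanish at $t=0$, giving $u_t(x,0;k)=0$. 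The exclusion $k\neq-1,-3,-5,\dots$ enters here twice: it makes the constant $(k+1)(k+3)\cdots(k+2m-1)$ nonzero in \eqref{33}, and it makes the intermediate parameters $k,k+2,\dots,k+2m$ all different from $-1$, so the Taylor recursion for the $a_{2j}$ is solvable at each stage.

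The main obstacle, in my view, is not the algebra of the recursion but the regularity bookkeeping. One must justify the $m$-fold application of $\frac{\partial}{t\,\partial t}$ (equivalently, $m$ differentiations under the integral sign) and still land in $C^2_{ev}$, so that \eqref{34} is a genuine classical solution; this works because the representation of $u(x,t;k+2m)$ from Theorem~1 carries the weight $(1-|y|^2)^{\frac{k+2m-n-|\gamma|-1}{2}}$, whose smoothness supplies extra derivatives that compensate for those removed by $\bigl(\frac{\partial}{t\,\partial t}\bigr)^m$, and the exact count is precisely what the hypothesis $f\in C^{\left[\frac{n+|\gamma|-k}{2}\right]+2}_{ev}$ encodes. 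Stating this balance of ``smoothing gained from the kernel'' against ``derivatives spent'' sharply is the delicate step; once it is in place, a direct substitution of \eqref{34} into \eqref{32}--\eqref{GEQ143} (in the spirit of the verification already carried out for the explicit formula in Theorem~1) confirms that $u(x,t;k)$ is the asserted solution.
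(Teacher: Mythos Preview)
Your proposal is correct and follows essentially the same route as the paper's proof: both arguments reduce to Theorem~1 by choosing the minimal $m$ with $k+2m\geq n+|\gamma|-1$ and then descending from parameter $k+2m$ to $k$ via Weinstein's recursion formulas \eqref{Rec1}--\eqref{Rec2}. The only difference is organizational: the paper applies \eqref{Rec1} once, then \eqref{Rec2} $m$ times, then \eqref{Rec1} once more, whereas you package the same three moves into a single one-step identity $u=t^{1-k}\frac{\partial}{t\,\partial t}(t^{k+1}v)$ and iterate it $m$ times with a telescoping argument; your verification of the initial conditions via the even Taylor expansion and the identity $\bigl(\tfrac{\partial}{t\,\partial t}\bigr)^m t^{k+2m-1}=(k+1)(k+3)\cdots(k+2m-1)\,t^{k-1}$ is in fact slightly more explicit than the paper's asymptotic statement, and your remarks on the regularity bookkeeping match the paper's concluding sentence.
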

\begin{proof}
 In order to proof that \eqref{34} is the solution of \eqref{GEQ142}--\eqref{GEQ143} when $k{<}n+|\gamma|{-}1$, $k{\neq} {-}1,{-}3,{-}5,{...}$ we will use the recursion formulas \eqref{Rec1} and \eqref{Rec2}. Let choose minimum integer $m$ such that $k+2m\geq n+|\gamma|-1$. Now we can write the solution of the Cauchy problem
$$
   (B_{k+2m})_t u=(\Delta_\gamma)_x u,
$$
$$
u(x,0;k+2m)=g(x),\qquad u_t(x,0;k+2m)=0,\quad g\in C^2_{ev}
$$
by \eqref{Sol1}. We have
$$
u(x,t;k+2m)=
$$
$$
=\frac{2^{n}\Gamma\left(\frac{k+2m+1}{2}\right)} {\prod\limits_{i=1}^n{\Gamma\left(\frac{\gamma_i{+}1}{2}\right)}\Gamma\left(\frac{k+2m-n-|\gamma|{+}1}{2}\right)} \int\limits_{B_{1}^{+} (n)}[\,^{\gamma}T^{ty}g(x)](1-|y|^2)^{\frac{k+2m-n-|\gamma|-1}{2}}y^\gamma dy,
$$
and, using \eqref{Rec1} we obtain
$$
t^{k+2m-1}u(x,t;k+2m)=u(x,t;2-k-2m).
$$
Applying \eqref{Rec2} to the last formula  $m$ times we get
$$
\left(\frac{\partial}{t \partial t}\right)^m(t^{k+2m-1}u(x,t;k+2m)=u(x,t;2-k).
$$
Applying again \eqref{Rec1} we can write
\begin{equation}\label{34}
u(x,t;k)=t^{1-k}\,\left(\frac{\partial}{t \partial t}\right)^m(t^{k+2m-1}u(x,t;k+2m)),
\end{equation}
which gives the solution of the  \eqref{32}. Now we obtain the function $g$ such that the \eqref{33} is true.
From \eqref{34} we have asymptotic relation
$$
u(x,t;k)=(k+1)(k+3)...(k+2m-1)u(x,t;k+2m)+C\,t\,u(x,t;k+2m)+O(t^2),\quad t\rightarrow 0,
$$
where $C$ is a constant. Therefore, if
$$
g(x)=\frac{f(x)}{(k+1)(k+3){...}(k+2m-1)}
$$
then $u(x,t;k)$ defined by \eqref{34} satisfies the initial conditions \eqref{GEQ143}.

Let us recall that for $u(x,t;k+2m)$ to be a solution of \eqref{32}--\eqref{33} it is sufficient that $f\in C^2_{ev}$.
 In order to be able to carry out the construction \eqref{34}, it is sufficient to require that
$f\in C^{\left[\frac{n+|\gamma|-k}{2}\right]+2}_{ev}$.
\end{proof}

\begin{theorem} If $f$ is B--polyharmonic of order $\frac{1-k}{2}$ and even with respect to each variable then one of the solutions of the Cauchy problem \eqref{32}--\eqref{33}  for the  $k{=}{-}1,{-}3,{-}5,{...}$ is given by
 \begin{equation}\label{Ex1}
 u(x,t;k)=f(x),\qquad k=-1,
\end{equation}
\begin{equation}\label{Ex2}
 u(x,t;k)=f(x)+\sum\limits_{h=1}^{-\frac{k+1}{2}}\frac{\Delta^h_\gamma f}{(k+1)...(k+2h-1)}\,\frac{t^{2h}}{2\cdot 4\cdot .... \cdot2h},\qquad k=-3,-5,...
\end{equation}
 \end{theorem}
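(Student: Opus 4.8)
\textit{Proof proposal.} The plan is to handle \eqref{Ex1} and \eqref{Ex2} uniformly by writing the candidate for the solution of $(B_k)_tu=(\Delta_\gamma)_xu$, $u(x,0;k)=f(x)$, $u_t(x,0;k)=0$ as the single finite sum
$$
u(x,t;k)=\sum_{h=0}^{p-1}c_h\,t^{2h}\,\Delta_\gamma^{h}f(x),\qquad p=\frac{1-k}{2},
$$
where $p$ is a positive integer (since $k=-1,-3,-5,\dots$), $c_0=1$, and $c_h=\bigl[(k+1)(k+3)\cdots(k+2h-1)\cdot 2\cdot 4\cdots 2h\bigr]^{-1}$ for $1\le h\le p-1$. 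For $k=-1$ we have $p=1$, the sum collapses to $u=f$, giving \eqref{Ex1}; for $k=-3,-5,\dots$ we have $-\frac{k+1}{2}=p-1$, so the sum is precisely \eqref{Ex2}. The first point I would settle is that the coefficients $c_h$ are well defined: $k+2h-1=2(h-p)$, which is nonzero exactly for $1\le h\le p-1$, so no denominator vanishes. This is where the hypothesis enters — requiring $f$ to be $B$-polyharmonic of order $p=\frac{1-k}{2}$ truncates the sum at $h=p-1$, just before the factor $k+2h-1$ would become $0$; this is also the structural reason the odd negative values of $k$ had to be set aside in Theorem~3.2.

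Next I would check that this $u$ actually solves the Cauchy problem. The initial conditions are immediate, since $u$ is a polynomial in $t$ involving only even powers of $t$: $u(x,0;k)=c_0f(x)=f(x)$ and $u_t(x,0;k)=0$. For the equation I would apply $(B_k)_t$ termwise, using $(B_k)_tt^{2h}=2h(2h-1)t^{2h-2}+2hk\,t^{2h-2}=2h(2h+k-1)t^{2h-2}$, to obtain
$$
(B_k)_tu=\sum_{h=1}^{p-1}2h(2h+k-1)\,c_h\,t^{2h-2}\,\Delta_\gamma^{h}f(x),
$$
and apply $(\Delta_\gamma)_x$ termwise to obtain $(\Delta_\gamma)_xu=\sum_{h=0}^{p-1}c_h\,t^{2h}\,\Delta_\gamma^{h+1}f(x)$. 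In the latter, the top term $h=p-1$ contains $\Delta_\gamma^{p}f=0$ and drops out, and reindexing the rest by $h\mapsto h-1$ gives $(\Delta_\gamma)_xu=\sum_{h=1}^{p-1}c_{h-1}\,t^{2h-2}\,\Delta_\gamma^{h}f(x)$. Comparing the two expressions, $(B_k)_tu=(\Delta_\gamma)_xu$ holds provided $2h(2h+k-1)\,c_h=c_{h-1}$ for $h=1,\dots,p-1$, and this is immediate from the explicit formula, because $c_{h-1}/c_h=(k+2h-1)\cdot 2h$. Hence the $u$ of \eqref{Ex1}, \eqref{Ex2} solves the problem.

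I do not expect a genuine obstacle: the argument is a short direct substitution, and the only delicate points are the two already isolated — the non-vanishing of the denominators, which is forced by the polyharmonicity order, and the cancellation of the highest-order term via $\Delta_\gamma^{p}f=0$. The smoothness built into ``$B$-polyharmonic of order $p$'', namely $f\in C^{2p}_{ev}$, is exactly what makes $\Delta_\gamma^{h}f$ with $h\le p$ classically meaningful. Finally, uniqueness is not asserted — and indeed fails for $k=-1,-3,-5,\dots$ — so the statement claims, and the proof needs only to establish, that the displayed $u$ is one particular solution.
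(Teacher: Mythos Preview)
Your argument is correct and complete: the direct term-by-term substitution, the recurrence $2h(2h+k-1)c_h=c_{h-1}$, the vanishing of the top term via $\Delta_\gamma^{p}f=0$, and the well-definedness check $k+2h-1=2(h-p)\neq 0$ for $1\le h\le p-1$ all go through exactly as you write.

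Your route, however, is genuinely different from the paper's. The paper does \emph{not} verify the formula by substitution. Instead it argues in the reverse direction: assuming a smooth solution $u(x,t;k)$ exists, it lets $t\to 0$ in the equation, uses the L'H\^opital-type identity $\lim_{t\to 0}\frac{1}{t}u_t=\lim_{t\to 0}u_{tt}$, and thereby shows that the initial datum $f$ must satisfy $\Delta_\gamma f=0$ when $k=-1$; then, via the recursion $\frac{1}{t}u_t(x,t;-3)=u(x,t;-1)$, it bootstraps to $\Delta_\gamma^2 f=0$ for $k=-3$, and so on. Having thus motivated the $B$-polyharmonicity hypothesis as a \emph{necessary} condition, the paper simply states the formula \eqref{Ex2} without further verification. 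Your proof supplies precisely that missing verification and is, in that sense, the more rigorous of the two as a proof of the theorem \emph{as stated} (which asserts sufficiency). What the paper's approach buys is insight into \emph{why} the exceptional values $k=-1,-3,-5,\dots$ force the polyharmonic restriction on the data; what yours buys is an unambiguous check that the displayed polynomial-in-$t$ expression actually solves the problem.
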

\begin{proof}
Let us first take $k=-1$ and assume that $\lim\limits_{t\rightarrow 0}\frac{\partial^2u(x,t;-1)}{\partial t^2}$ exists. Let $t\rightarrow 0$ in
$$
(\Delta_\gamma)_x u^{-1}(x,t;k)=\frac{\partial^2 u(x,t;-1)}{\partial t^2}-\frac{1}{t}\frac{\partial u(x,t;-1)}{\partial t},
$$
i.e
$$
(\Delta_\gamma)_x u(x,0;-1){=}\lim\limits_{t\rightarrow 0}\frac{\partial^2 u(x,t;-1)}{\partial t^2}{-}\lim\limits_{t\rightarrow 0}\frac{1}{t}\frac{\partial u(x,t;-1)}{\partial t}{=}0.
$$
We find that  $(\Delta_\gamma)_x u(x,0;-1)=0$ which shows that $f$ must be B-harmonic. So the function $f$ satisfies \eqref{32}--\eqref{33}  for the  $k{=}{-}1$.

When $k=-3$ we have
$$
\lim\limits_{t\rightarrow 0}\frac{\partial^2 u(x,t;-3)}{\partial t^2}=\lim\limits_{t\rightarrow 0}\frac{1}{t}\frac{\partial u(x,t;-3)}{\partial t}.
$$
From the general form of the  Euler--Poisson--Darboux equation for $k{=}{-}3$ we obtain
$$
\lim\limits_{t\rightarrow 0}(\Delta_\gamma)_x u(x,t;-3){=}\lim\limits_{t\rightarrow 0}\frac{\partial^2 u(x,t;-3)}{\partial t^2}-3\lim\limits_{t\rightarrow 0}\frac{1}{t}\frac{\partial u(x,t;-3)}{\partial t}{=}-2\lim\limits_{t\rightarrow 0}\frac{1}{t}\frac{\partial u(x,t;-3)}{\partial t}.$$
It is follows from \eqref{Rec2} that
$$\frac{1}{t}\frac{\partial u(x,t;-3)}{\partial t}=u(x,t;-1)
$$
hence
 \begin{equation}\label{35}
   \lim\limits_{t\rightarrow 0}(\Delta_\gamma)_x u(x,t;-3)=-2u(x,0;-1).
 \end{equation}
If the limit $\lim\limits_{t\rightarrow 0}\frac{\partial^4 u(x,t;-3)}{\partial t^4}$ exists  and all odd derivatives of $u(x,t;-3)$ tend to zero when $t\rightarrow0$, then $\lim\limits_{t\rightarrow 0}\frac{\partial^2 u(x,t;-1)}{\partial t^2}$ also exists. Therefore, $\lim\limits_{t\rightarrow 0}(\Delta_\gamma)_x u(x,t;-1)=0$  and by \eqref{35} we have $\lim\limits_{t\rightarrow 0}(\Delta_\gamma)_x^2 u(x,t;-3)=0$. This remark can be easily generalized to include all the exceptional values.
So, in this case a solution of Cauchy problem for the general form of the Euler--Poisson--Darboux equation for the case  $k{=}{-}3,{-}5,{...}$ is given by the formula
 $$
 u(x,t;k)=f(x)+\sum\limits_{h=1}^{-\frac{k+1}{2}}\frac{\Delta^h_\gamma f}{(k+1)...(k+2h-1)}\,\frac{t^{2h}}{2\cdot 4\cdot .... \cdot 2h},\qquad k=-3,-5,...
 $$
 and as we proved earlier $ u(x,t;-1)=f(x)$.
\end{proof}

\section{Solution of the  singular Cauchy problem using the Hankel transform}

In this section we are looking for the solution $u\in S_{ev}'(\mathbb{R}^{n}_{+})\times C^2(0,\infty)$\footnote{ Notation $u\in S_{ev}'(\mathbb{R}^{n}_{+})\times C^2(0,\infty)$ means that $u(x,t;k)$ belongs to $S_{ev}'(\mathbb{R}^{n}_{+})$ by variable $x$ and belongs to $C^2(0,\infty)$ by variable $t$.}
 of the
\begin{equation} \label{GEQ1421} (B_k)_tu{=}(\Delta _{\gamma })_x u, \quad u=u(x,t;k),\quad x\in\mathbb{R}^n_+,\quad t>0,
 \end{equation}
\begin{equation} \label{GEQ1431} u(x,0;k)=f(x),\quad \quad u_{t} (x,0;k)=0.
\end{equation}
when $f(x)\in S_{ev}'(\mathbb{R}^{n}_{+})$, $k\in\mathbb{R}\setminus\{-1,-3,-5,...\}$.

\begin{theorem} The solution $u\in S_{ev}'(\mathbb{R}^{n}_{+})\times C^2(0,\infty)$ of the  \eqref{GEQ1421}--\eqref{GEQ1431} when $k\neq -1,-3,-5,...$ is defined by the formula
\begin{equation}\label{Sol0}
u(x,t;k)=\frac{ 2^n t^{1-k}\Gamma\left(\frac{k+1}{2}\right)}{\Gamma\left(\frac{k-n-|\gamma|+1}{2}\right)\prod\limits^n_{i=1}\Gamma\left(\frac{\gamma_i+1}{2}\right)}((t^2-|x|^2)_{+,\gamma}^{\frac{k-n-|\gamma|-1}{2}}*f(x))_\gamma.
\end{equation}
The solution  \eqref{Sol0} is unique for $k\geq0$ and not unique for $k<0$.
\end{theorem}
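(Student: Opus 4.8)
The plan is to apply the multidimensional Hankel transform $F_B$ in the spatial variable $x$ and reduce the singular Cauchy problem \eqref{GEQ1421}--\eqref{GEQ1431} to a family of one-dimensional Cauchy problems for the Bessel operator $(B_k)_t$. Writing $\widehat u(\xi,t;k)=(F_B)_x[u(x,t;k)](\xi)$ and using Lemma 1 (formula \eqref{SaprBessel}), the equation becomes $(B_k)_t\widehat u(\xi,t;k)=-|\xi|^2\widehat u(\xi,t;k)$ with $\widehat u(\xi,0;k)=\widehat f(\xi)$, $\widehat u_t(\xi,0;k)=0$. For each fixed $\xi$ this is the classical Bessel-type ODE whose even, smooth-at-$t=0$ solution is $\widehat u(\xi,t;k)=\widehat f(\xi)\,j_{\frac{k-1}{2}}(t|\xi|)$, by \eqref{BessBess}; the normalization $j_\nu(0)=1$ and evenness of $j_\nu$ give exactly the two initial conditions. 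Applying the inverse Hankel transform, $u(x,t;k)=F_B^{-1}[\widehat f(\xi)\,j_{\frac{k-1}{2}}(t|\xi|)](x)$, which we recognize as a generalized convolution $ (G_k(\cdot,t)*f)_\gamma$ with $G_k(x,t)$ the function whose Hankel transform is $t^{1-k}$-normalized $j_{\frac{k-1}{2}}(t|\xi|)$. Invoking Lemma 2 (formula \eqref{FBOF}), $G_k(x,t)$ is precisely $\dfrac{2^n t^{1-k}\Gamma\left(\frac{k+1}{2}\right)}{\Gamma\left(\frac{k-n-|\gamma|+1}{2}\right)\prod_{i=1}^n\Gamma\left(\frac{\gamma_i+1}{2}\right)}(t^2-|x|^2)_{+,\gamma}^{\frac{k-n-|\gamma|-1}{2}}$, which yields \eqref{Sol0}. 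Throughout, the convolution and the transform act in the sense of $S_{ev}'(\mathbb{R}^n_+)$, so the computation is justified by duality against test functions in $S_{ev}$, using that $F_B$ is an automorphism of $S_{ev}$ and transposes to an automorphism of $S_{ev}'$; the $C^2(0,\infty)$ regularity in $t$ is read off from the explicit dependence on $t$ through $t^{1-k}$ and the smooth function $j_{\frac{k-1}{2}}(t|\xi|)$, for $k\neq -1,-3,-5,\dots$ so that $\Gamma\left(\frac{k-n-|\gamma|+1}{2}\right)$ and the other Gamma factors cause no obstruction to interpreting the weighted functional $(t^2-|x|^2)_{+,\gamma}^{\frac{k-n-|\gamma|-1}{2}}$ (analytic continuation in the exponent as in \cite{Gelfand}).

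For the uniqueness claim, suppose $u$ and $v$ are two solutions in $S_{ev}'(\mathbb{R}^n_+)\times C^2(0,\infty)$; then $w=u-v$ satisfies the homogeneous problem with zero data. Taking the Hankel transform, for each fixed $\xi$ the function $t\mapsto\widehat w(\xi,t;k)$ solves $(B_k)_t\widehat w=-|\xi|^2\widehat w$ with $\widehat w(\xi,0;k)=0$. The space of solutions of this second-order ODE that are $C^2$ on $(0,\infty)$ is two-dimensional, spanned by $j_{\frac{k-1}{2}}(t|\xi|)$ and a second independent solution behaving like $t^{1-k}$ near $t=0$ (for $k$ not an odd negative integer these two are genuinely independent). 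For $k\ge 0$ the second solution is unbounded — indeed not locally integrable against $t^k\,dt$ — as $t\to 0^+$, so the requirement that $\widehat w(\xi,\cdot;k)$ be a bona fide $C^2$ function on $[0,\infty)$ compatible with the distributional initial condition forces the coefficient of the singular solution to vanish; combined with $\widehat w(\xi,0;k)=0$ this gives $\widehat w\equiv 0$, hence $w=0$. For $k<0$ (and $k\neq-1,-3,\dots$) the "singular" solution $t^{1-k}j_{\frac{k-1}{2}}(t|\xi|)\cdot(\text{something})$ — more precisely $t^{1-k}j_{\frac{1-k}{2}}(t|\xi|)$, cf. the recursion \eqref{Rec1} — is itself $C^2$ up to $t=0$ and vanishes there together with its $t$-derivative, so it may be added freely: explicitly $u(x,t;k)+c\,t^{1-k}\bigl(\text{solution of }(B_{2-k})_t\text{-problem with data }0\bigr)$ is another solution, exhibiting non-uniqueness. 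I would state this counterexample concretely using \eqref{Rec1}: if $v(x,t;2-k)$ solves the $(B_{2-k})_t$-equation with $v(x,0;2-k)=0$, $v_t(x,0;2-k)=0$ but $v\not\equiv 0$ (which exists precisely because $2-k>2>n+|\gamma|$ may fail — so one instead takes any nontrivial solution of the homogeneous $(B_{2-k})_t$-problem built from a nonzero B-harmonic-type profile, available since $2-k$ is large and positive hence the parameter lies in the "good" range), then $t^{1-k}v(x,t;2-k)$ is a nonzero solution of the homogeneous $(B_k)_t$-problem.

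The main obstacle I anticipate is the uniqueness half, specifically making rigorous the passage from "for each fixed $\xi$ the ODE solution is unique in an appropriate class" to "$u$ is unique as an element of $S_{ev}'\times C^2$." The subtlety is that a priori $\widehat w(\xi,t;k)$ is only a distribution in $\xi$, so pointwise-in-$\xi$ ODE arguments need to be justified — one way is to pair with an arbitrary $\varphi\in S_{ev}$, obtaining a scalar function $t\mapsto(\widehat w(\cdot,t;k),\varphi)_\gamma$ which is $C^2$ on $(0,\infty)$, continuous at $0$ with value $0$, and (by continuity of $(B_k)_t$ and of multiplication by $|\xi|^2$ on $S_{ev}'$) satisfies the scalar relation $(B_k)_t(\widehat w,\varphi)_\gamma=-(\,|\xi|^2\widehat w,\varphi)_\gamma$; but the right side couples different $\xi$, so to decouple one should instead localize $\varphi$ near a point $\xi_0$ and use a dilation/approximation argument, or — cleaner — directly invoke that $j_{\frac{k-1}{2}}(t|\xi|)$ as a multiplier on $S_{ev}'$ already gives one solution and that any other differs by something annihilated by $(B_k)_t+|\xi|^2$; the energy/growth estimate at $t\to0^+$ distinguishing the two ODE solutions is then what rules out (for $k\ge0$) a nonzero difference. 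I would handle the $k\ge 0$ bound by noting $\int_0 t^k\,|\text{singular solution}|^2\,dt=\infty$ while the $C^2$ requirement forces local $L^2_k$ behavior, contradicting the presence of that mode. The rest — the forward construction and verification — is a routine bookkeeping of Gamma factors already packaged in Lemmas 1 and 2.
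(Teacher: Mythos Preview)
Your forward construction is essentially the paper's own argument: apply $F_B$ in $x$ via Lemma~1, reduce to the scalar Cauchy problem $(B_k)_t\widehat u=-|\xi|^2\widehat u$, take the solution $\widehat u=\widehat f\cdot j_{\frac{k-1}{2}}(t|\xi|)$, and invert using Lemma~2 to obtain the weighted convolution \eqref{Sol0}. The paper does exactly this, except that instead of verifying the ODE solution from \eqref{BessBess} it quotes Bresters~\cite{Bresters1} for the full catalogue \eqref{221}--\eqref{223} of solutions (which already encodes the free constant $A$ for $k<0$), and for uniqueness it simply cites Fox~\cite{Fox}.

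Where you and the paper part ways is the (non-)uniqueness discussion, and here your write-up contains a genuine slip. For $k<0$ you try to build a nonzero null solution as $t^{1-k}v(x,t;2-k)$ with $v$ solving the $(B_{2-k})_t$-problem with \emph{zero} data $v(x,0;2-k)=0$, $v_t(x,0;2-k)=0$, $v\not\equiv0$. But $2-k>0$, so by the very uniqueness you are arguing for in the range $k\ge0$ no such $v$ exists; your parenthetical attempt to rescue this by invoking the ``good range'' only reinforces that uniqueness holds there. The correct (and simpler) version is to take $v(x,t;2-k)$ with \emph{nonzero} initial value $v(x,0;2-k)=g\neq0$; then $w=t^{1-k}v$ solves $(B_k)_tw=(\Delta_\gamma)_xw$ by \eqref{Rec1}, and since $1-k>1$ both $w$ and $w_t$ vanish at $t=0$, giving a nontrivial null solution. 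On the Hankel side this is exactly the second term $A\,t^{\frac{1-k}{2}}J_{\frac{1-k}{2}}(|\xi|t)$ in \eqref{222}. Your $k\ge0$ uniqueness sketch is morally right but, as you note yourself, the passage from pointwise-in-$\xi$ ODE reasoning to the distributional statement needs care (and for $k=1$ the second solution involves $Y_0$, not a power); the paper sidesteps all of this by citation.
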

\begin{proof}
Applying multidimensional Hankel transform to \eqref{GEQ1421} with respect to the  variables $x_1,...,x_n$ only and using \eqref{SaprBessel} we obtain
\begin{equation}\label{22}
\left(|\xi|^2+\frac{\partial^2}{\partial t^2}+\frac{k}{t}\frac{\partial}{\partial t}\right)\widehat{u}(\xi,t)=0,
\end{equation}
\begin{equation}\label{23}
\lim\limits_{t\rightarrow 0}\widehat{u}(\xi,t;k)=\widehat{f}(\xi),\qquad \lim\limits_{t\rightarrow 0}\frac{\partial \widehat{u}(\xi,t;k)}{\partial t}=0,
\end{equation}
where $\xi{=}(\xi_l,\xi_2,{...},\xi_n){\in}\mathbb{R}^n_+$ corresponds to $x{=}(x_1,{...},x_n){\in}\mathbb{R}^n_+$, $|\xi|^2{=}\xi^2_1{+}\xi_2^2{+}{...}{+}\xi_n^2$,
$$
\widehat{u}(\xi,t;k)=(F_B)_x[u(x,t;k)](\xi)=\int\limits_{\mathbb{R}^n_+}u(x,t;k)\,\mathbf{j}_\gamma(x;\xi)x^\gamma dx
$$
and $\widehat{f}(\xi)=F_B[f](\xi)$.

In \cite{Bresters1} the solution  $\widehat{G}^k(\xi,t)$ of the Cauchy problem
$$
\left(|\xi|^2+\frac{\partial^2}{\partial t^2}+\frac{k}{t}\frac{\partial}{\partial t}\right)\widehat{G}^k(\xi,t)=0,
$$
$$
\lim\limits_{t\rightarrow 0}\widehat{G}^k(\xi,t)=1,\qquad \lim\limits_{t\rightarrow 0}\frac{\partial \widehat{G}^k(\xi,t)}{\partial t}=0
$$
was obtained and it has the form
\begin{equation}\label{221}
    \widehat{G}^k(\xi,t)=j_{\frac{k-1}{2}}(|\xi| t),
\end{equation}
for $ k\geq 0,$
\begin{equation}\label{222}
\widehat{G}^k(\xi,t)=j_{\frac{k-1}{2}}(|\xi| t)+A t^{\frac{1-k}{2}}J_{\frac{1-k}{2}}(|\xi| t),
\end{equation}
for $ k<0,\qquad k\neq -1,-3,-5,...$,
\begin{equation}\label{223}
        \widehat{G}^k(\xi,t)=B t^{\frac{1-k}{2}}J_{\frac{1-k}{2}}(|\xi| t)-\frac{\pi 2^{\frac{k-1}{2}}}{\Gamma\left(\frac{1-k}{2}\right)}\, (|\xi| t)^{\frac{1-k}{2}} Y_{\frac{1-k}{2}}(|\xi| t),
\end{equation}
for $k=-1,-3,-5,...$.

In \eqref{221}--\eqref{223} $A$ and $B$ are arbitrary complex numbers and $Y_\nu(z)$ is a Bessel functions
of the  the second kind. The solutions \eqref{222}, \eqref{223} depend on the constants $A$ and $B$  since they are is not unique (see \cite{Bresters1}).
When $\widehat{G}^k(\xi,t)$ is found, the solution of \eqref{22}--\eqref{23} is
$$
\widehat{u}(\xi,t;k)=\widehat{G}^k(\xi,t)\cdot \widehat{f}(\xi)
$$
and the solution of \eqref{GEQ1421}--\eqref{GEQ1431} is then given by
$$
u(x,t;k)=((F^{-1}_B)_\xi[\widehat{G}^k(\xi,t)]*f(x))_\gamma=({G}^k(x,t)*f(x))_\gamma.
$$

We are looking for the solution when $k\neq -1,-3,-5,...$ and when $A=0$. The  obtained solution will be unique for $k\geq 0$ and will be one of the possible solutions for $k<0$, $k\neq -1,-3,-5,...$. So we are interested in case when $\widehat{G}^k(\xi,t)=j_{\frac{k-1}{2}}(|\xi| t)$
Using \eqref{FBOF} we can find $(F^{-1}_B)_\xi[j_{\frac{k-1}{2}}(|\xi| t)](x)$:
$$
{G}^k(x,t)=(F^{-1}_B)_\xi[j_{\frac{k-1}{2}}(|\xi|\cdot t)](x)=\frac{ 2^n t^{1-k}\Gamma\left(\frac{k+1}{2}\right)}{\Gamma\left(\frac{k-n-|\gamma|+1}{2}\right)\prod\limits^n_{i=1}\Gamma\left(\frac{\gamma_i+1}{2}\right)}(t^2-|x|^2)_{+,\gamma}^{\frac{k-n-|\gamma|-1}{2}}.
$$
Then the solution of the \eqref{GEQ1421}--\eqref{GEQ1431} has the form
\begin{equation}\label{Sol}
u(x,t;k)=\frac{ 2^n t^{1-k}\Gamma\left(\frac{k+1}{2}\right)}{\Gamma\left(\frac{k-n-|\gamma|+1}{2}\right)\prod\limits^n_{i=1}\Gamma\left(\frac{\gamma_i+1}{2}\right)}\left((t^2-|x|^2)_{+,\gamma}^{\frac{k-n-|\gamma|-1}{2}}*f(x)\right)_\gamma,
\end{equation}
$$
k\neq -1,-3,-5,...
$$

Since $(t^2-|x|^2)_{+,\gamma}^{\lambda}$ has its support in the interior of the part of the sphere $S_1^+(n)$ when $x_1\geq 0,...,x_n\geq0$, we may conclude that the convolution exists for arbitrary $\varphi(x)\in S'_+$. In \cite{Fox} it was shown that the solution of the singular Cauchy problem \eqref{GEQ1421}--\eqref{GEQ1431} is unique when $k$ is  nonnegative and not unique when  $k$ is negative.
\end{proof}

\begin{corollary}
For $k>n+|\gamma|-1$ when $f\in C^2_{ev}$  the solution of \eqref{GEQ1421}--\eqref{GEQ1431}
exists in the classical sense and given by
\begin{equation}\label{SolCon}
u(x,t;k){=}\frac{ 2^n \Gamma\left(\frac{k+1}{2}\right)}{\Gamma\left(\frac{k-n-|\gamma|+1}{2}\right)\prod\limits^n_{i=1}\Gamma\left(\frac{\gamma_i+1}{2}\right)}
\int\limits_{S^+_1(n)}(1-|y|^2)^{\frac{k-n-|\gamma|-1}{2}}\,^\gamma T^{t y}f(x)y^\gamma dy,
\end{equation}
Which coincides with formula \eqref{Sol1}.
\end{corollary}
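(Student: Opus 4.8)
The goal is to show that the regular-functional solution \eqref{Sol0} produced by the Hankel-transform argument reduces, under the hypotheses $k>n+|\gamma|-1$ and $f\in C^2_{ev}$, to an ordinary absolutely convergent integral, and that this integral is precisely the classical solution \eqref{Sol1} already verified by direct substitution in Theorem~3.1. First I would observe that for $k>n+|\gamma|-1$ the exponent $\lambda:=\tfrac{k-n-|\gamma|-1}{2}$ satisfies $\lambda>-1$; combined with $\gamma_i>0$ this makes $|y|^{2\lambda}$ locally integrable against the weight $y^\gamma$ near the origin, so by \eqref{WF} the distribution $(t^2-|x|^2)_{+,\gamma}^{\lambda}$ is a regular weighted functional and the weighted convolution in \eqref{Sol0} is the honest integral
$$
\bigl((t^2-|x|^2)_{+,\gamma}^{\lambda}*f\bigr)_\gamma=\int\limits_{\mathbb{R}^n_+}(t^2-|y|^2)_{+}^{\lambda}\,({}^\gamma T^{y}f)(x)\,y^\gamma\,dy=\int\limits_{B^+_t(n)}(t^2-|y|^2)^{\lambda}\,({}^\gamma T^{y}f)(x)\,y^\gamma\,dy,
$$
the reduction of the domain to $B^+_t(n)$ being exactly the support property of $(t^2-|x|^2)_{+}^{\lambda}$ used at the end of the proof of Theorem~4.1, and the convergence being clear since $B^+_t(n)$ is compact, the kernel is integrable against $y^\gamma$, and ${}^\gamma T^{y}f(x)$ is continuous.

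Next I would carry out the scaling $y=tz$: then $y^\gamma=t^{|\gamma|}z^\gamma$, $dy=t^n\,dz$, $(t^2-|y|^2)^{\lambda}=t^{2\lambda}(1-|z|^2)^{\lambda}$, and since $2\lambda+n+|\gamma|=k-1$ this gives
$$
\bigl((t^2-|x|^2)_{+,\gamma}^{\lambda}*f\bigr)_\gamma=t^{\,k-1}\int\limits_{B^+_1(n)}(1-|z|^2)^{\frac{k-n-|\gamma|-1}{2}}\,({}^\gamma T^{tz}f)(x)\,z^\gamma\,dz.
$$
Substituting into \eqref{Sol0}, the prefactor $t^{1-k}$ cancels the $t^{k-1}$ and the remaining constant is exactly the one in \eqref{SolCon}, so $u(x,t;k)$ equals the right-hand side of \eqref{Sol1}, the integration being over $B^+_1(n)$.

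It remains to check that this expression is a genuine classical solution. Since $f\in C^2_{ev}$, the generalized translation ${}^\gamma T^{tz}f(x)$ is twice continuously differentiable in $x$ and, for $t>0$, in $t$; as $B^+_1(n)$ is compact and the kernel $(1-|z|^2)^{\lambda}z^\gamma$ is integrable on it (again because $\lambda>-1$ and $\gamma_i>0$), differentiation under the integral sign is legitimate, so \eqref{Sol1} defines a twice continuously differentiable function on $\mathbb{R}^n_+\times(0,\infty)$. That it solves \eqref{GEQ1421}--\eqref{GEQ1431} pointwise is then exactly the content of Theorem~3.1 (the initial conditions following from formulas (5.20)--(5.21) of \cite{Levitan} as used there). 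The only delicate point is the first step — identifying the distributional convolution in \eqref{Sol0} with the classical integral and controlling the boundary behaviour at $|z|=1$ — but this is governed entirely by the inequality $\lambda>-1$ forced by $k>n+|\gamma|-1$; everything else is the elementary change of variables displayed above.
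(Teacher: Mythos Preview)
Your argument is correct and follows essentially the same route as the paper: observe that $k>n+|\gamma|-1$ forces the exponent $\lambda=\tfrac{k-n-|\gamma|-1}{2}>-1$, so the weighted distribution becomes an ordinary integrable function, then restrict to the ball $B_t^+(n)$ and rescale $y\mapsto ty$ to recover \eqref{Sol1}. Your write-up is in fact more careful than the paper's terse version, and you correctly note that the integration domain should be $B_1^+(n)$ rather than the $S_1^+(n)$ appearing (evidently as a typo) in the corollary's display.
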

\begin{proof} In the case when $k>n+|\gamma|-1$ and $f(x)$ is continuous and even with respect to all variables the integral in \eqref{Sol} exists in the classical sense. So, taking in \eqref{Sol}  usual function $(t^2-|x|^2)^{\lambda}$ instead of the weighted generalized function  $(t^2-|x|^2)_{+,\gamma}^{\lambda}$,  passing to the integral over  the part of the sphere $S_t^+{=}\{x{\in}\mathbb{R}^n_+{:}|x|{<}t\}$ and changing the variables by formula $x=ty$ we obtain \eqref{SolCon}.
\end{proof}

\section{Case when the $x$ is one-dimensional}

In this section we concentrate on the case when $x$ is one--dimensional. Then problems and constructed above solutions are simplified. For these problems we consider below  some illustrative examples with explicit solution representations and some visual graphs using the Wolfram Alpha.

In this case we have the Cauchy problem
\begin{equation}\label{OneD1}
\frac{\partial^2 u}{\partial x^2}+\frac{\gamma}{x}\frac{\partial u}{\partial x}=\frac{\partial^2 u}{\partial t^2}+\frac{k}{t}\frac{\partial u}{\partial t},
\end{equation}
\begin{equation}\label{OneD2}
    u(x,0;k)=f(x),\qquad \frac{\partial u(x,t;k)}{\partial t}\biggr|_{t=0}=0,\qquad f(x)\in C_{ev}^2(\overline{\mathbb{R}}\,^{1}_{+}).
\end{equation}
When $k>\gamma>0$ the solution of \eqref{OneD1}--\eqref{OneD2} is given by the formula (see \eqref{Sol1})
\begin{equation}\label{SolEx}
u(x,t;k){=}\frac{ 2\Gamma\left(\frac{k+1}{2}\right)}{\Gamma\left(\frac{k-\gamma}{2}\right)\Gamma\left(\frac{\gamma+1}{2}\right)}
\int\limits_0^1(1-y^2)^{\frac{k-\gamma-2}{2}}\,^\gamma T^{ty}f(x)y^\gamma
dy.
\end{equation}
When $k<\gamma$ the solution of \eqref{OneD1}--\eqref{OneD2} is found by the formulas \eqref{34}, \eqref{Ex1} or \eqref{Ex2}.
\newpage
\textbf{Example 1.}
We are looking for the solution of
$$
\frac{\partial^2 u}{\partial x^2}+\frac{\gamma}{x}\frac{\partial u}{\partial x}=\frac{\partial^2 u}{\partial t^2}+\frac{k}{t}\frac{\partial u}{\partial t},\qquad k>\gamma>0,
$$
$$
    u(x,0;k)={j}_{\frac{\gamma-1}{2}}(x),\qquad \frac{\partial u(x,t;k)}{\partial t}\biggr|_{t=0}=0,\qquad f(x)\in C_{ev}^2(\overline{\mathbb{R}}\,^{1}_{+}).
$$

By \eqref{SolEx} we obtain
$$
u(x,t;k){=}\frac{ 2\Gamma\left(\frac{k+1}{2}\right)}{\Gamma\left(\frac{k-\gamma}{2}\right)\Gamma\left(\frac{\gamma+1}{2}\right)}
\int\limits_0^1(1-y^2)^{\frac{k-\gamma-2}{2}}\,^\gamma T^{ty}{j}_{\frac{\gamma-1}{2}}(x)y^\gamma
dy.
$$
The next formula is valid
$$
T^{ty}{j}_{\frac{\gamma-1}{2}}(x)={j}_{\frac{\gamma-1}{2}}(x){j}_{\frac{\gamma-1}{2}}(ty)
$$
and so
$$
u(x,t;k){=}{j}_{\frac{\gamma-1}{2}}(x)\,t^{\frac{1-\gamma}{2}}\,\frac{ 2^{\frac{\gamma+1}{2}}\Gamma\left(\frac{k+1}{2}\right)}{\Gamma\left(\frac{k-\gamma}{2}\right)}\int\limits_0^1(1-y^2)^{\frac{k-\gamma-2}{2}}
J_{\frac{\gamma-1}{2}}(ty)\,y^{\frac{\gamma+1}{2}}
dy.
$$

Using formula 2.12.4.6 from \cite{IR2}
\begin{equation}\label{Prud}
\int\limits_0^a x^{\nu+1}(a^2-x^2)^{\beta-1}J_\nu(cx)dx=\frac{2^{\beta-1}a^{\beta+\nu}}{c^{\beta}}
\Gamma(\beta)J_{\beta+\nu}(ac),
\end{equation}
$$
a>0,\qquad {\rm Re}\,\beta>0,\qquad {\rm Re}\,\nu>-1
$$
we obtain
\begin{equation}\label{Resh}
u(x,t;k){=}{j}_{\frac{\gamma-1}{2}}(x){j}_{\frac{k-1}{2}}(t).
\end{equation}

\begin{figure}[h!]
\center{\includegraphics[width=0.8\linewidth]{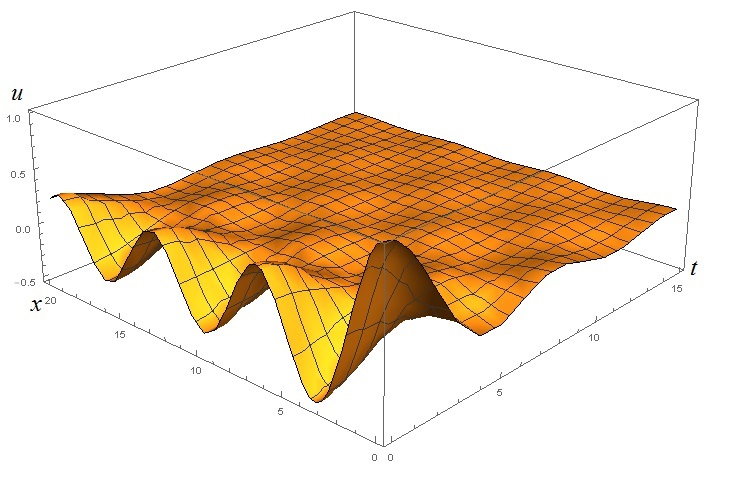}}
\caption{$u(x,t;k){=}{j}_{-\frac{1}{6}}(x){j}_{\frac{3}{4}}(t).$}\label{Ris1}
\end{figure}

\begin{figure}[h!]
\center{\includegraphics[width=0.8\linewidth]{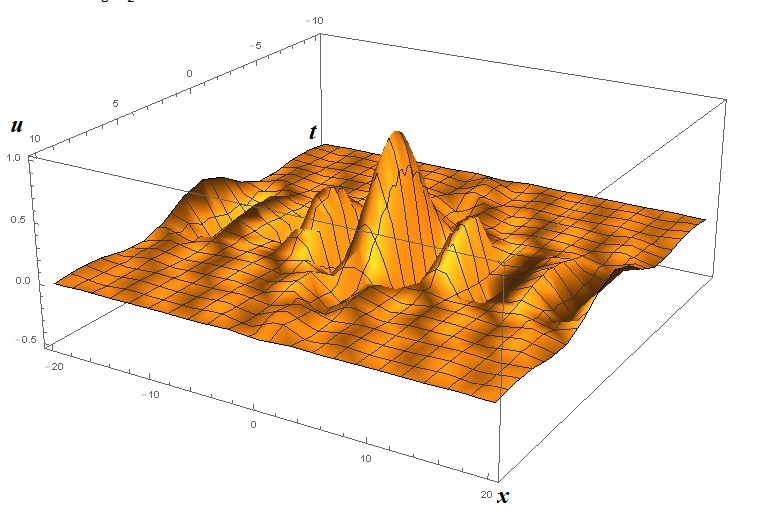}}
\caption{$u(x,t;k){=}{j}_{-\frac{1}{6}}(x){j}_{\frac{3}{4}}(t).$}\label{Ris2}
\end{figure}

The plot of \eqref{Resh} when $k=\frac{5}{2}$ and $\gamma=\frac{2}{3}$ is presented on Figure 1 obtained through the Wolfram$\mid$Alpha.

We can continue the solution to negative values of $x$ and $t$ as an even function. The plot of such continuation is presented on Figure 2.

\newpage

\textbf{Remark.} If we denote
\begin{equation}\label{OOOS}
    \,^{k,\gamma}T^t_xf(x){=}C(\gamma,k)\int\limits_0^1(1-y^2)^{\frac{k-\gamma-2}{2}}T^{ty}f(x)y^\gamma
dy,
\end{equation}
$$
 C(\gamma,k){=}\frac{ 2\Gamma\left(\frac{k+1}{2}\right)}{\Gamma\left(\frac{k-\gamma}{2}\right)\Gamma\left(\frac{\gamma+1}{2}\right)}.
$$
We can consider the operator \eqref{OOOS} as a generalized translation operator (see \cite{Lev3}). For this operator the next property
$$
\,^{k,\gamma}T^t_x{j}_{\frac{\gamma-1}{2}}(x)={j}_{\frac{\gamma-1}{2}}(x){j}_{\frac{k-1}{2}}(t)
$$
is valid.

\textbf{Example 2.} The solution of
$$
\frac{\partial^2 u}{\partial x^2}+\frac{\gamma}{x}\frac{\partial u}{\partial x}=\frac{\partial^2 u}{\partial t^2}+\frac{k}{t}\frac{\partial u}{\partial t},\qquad 1-\gamma\leq k<\gamma,\quad k\neq -1,-3,-5,...,\quad\gamma>\frac{1}{2},
$$
$$
    u(x,0;k)={j}_{\frac{\gamma-1}{2}}(x),\qquad \frac{\partial u(x,t;k)}{\partial t}\biggr|_{t=0}=0.
$$
is given by \eqref{OneD1} where $m=1$:
$$
u(x,t;k)=\frac{1}{t^{k}}\,\frac{\partial}{ \partial t}(t^{k+1}u(x,t;k+2)),
$$
and $u(x,t;k+2)$ is the solution of the Cauchy problem
$$
   (B_{k+2})_t u=(\Delta_\gamma)_x u,
$$
$$
u(x,0;k+2)=\frac{{j}_{\frac{\gamma-1}{2}}(x)}{k+1},\qquad u_t(x,0;k+2)=0.
$$
Using previous example we obtain
$$
u(x,t;k+2){=}\frac{1}{k+1}{j}_{\frac{\gamma-1}{2}}(x){j}_{\frac{k+1}{2}}(t).
$$
and
$$
u(x,t;k)=\,_0F_1\left(;\frac{\gamma +1}{2};-\frac{x^2}{4}\right) \, _0F_1\left(;\frac{k+1}{2};-\frac{t^2}{4}\right).
$$

The plot of \eqref{Resh} when $k=\frac{1}{3}$ and $\gamma=\frac{3}{2}$ is presented on Figure 3 obtained through the Wolfram$\mid$Alpha.
\begin{figure}[h!]
\center{\includegraphics[width=1\linewidth]{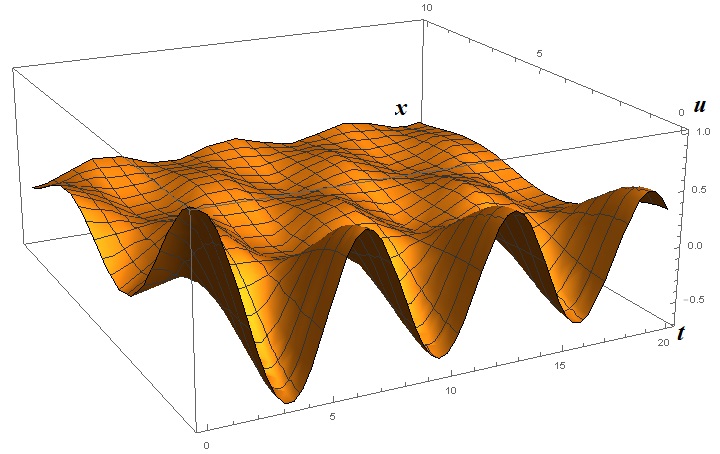}}
\caption{$u(x,t;k){=}\, _0F_1\left(;\frac{2}{3};-\frac{t^2}{4}\right) \, _0F_1\left(;\frac{5}{4};-\frac{x^2}{4}\right).$}\label{Ris3}
\end{figure}

We can continue the solution to negative values of $x$ and $t$ as an even function. The plot of such continuation is presented on Figure 4.
\begin{figure}[h!]
\center{\includegraphics[width=1\linewidth]{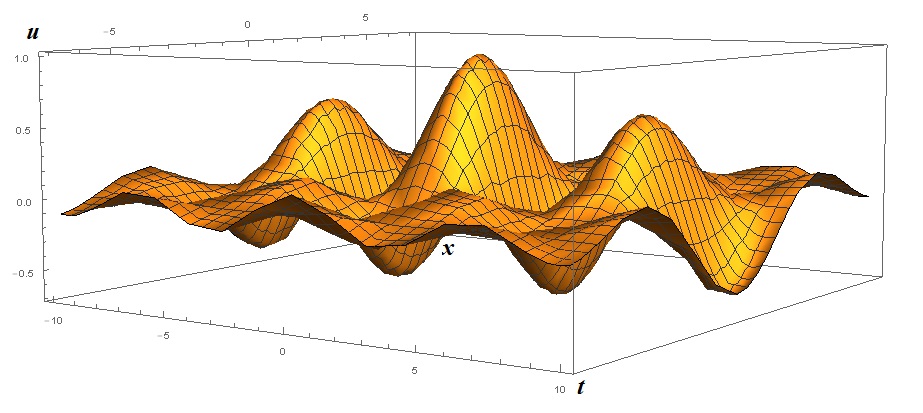}}
\caption{$u(x,t;k){=}\, _0F_1\left(;\frac{2}{3};-\frac{t^2}{4}\right) \, _0F_1\left(;\frac{5}{4};-\frac{x^2}{4}\right).$}\label{Ris4}
\end{figure}

\end{document}